\documentclass[11pt,a4paper]{amsart}
\usepackage{amssymb}
\usepackage{amsmath}
\usepackage{amsfonts}
\usepackage[english]{babel}
\usepackage[T1]{fontenc}
\usepackage[latin1]{inputenc}
\usepackage{amsthm}
\usepackage[all]{xy}
\usepackage{a4wide}

\date{17.02.2017}
\theoremstyle{definition}
\newtheorem{thm}{Theorem}[section]

\newtheorem{cor}[thm]{Corollary}
\newtheorem{prop}[thm]{Proposition}

\theoremstyle{plain}
\newtheorem{theorem}{Theorem}[section]

\theoremstyle{remark}

\newtheorem{example}[theorem]{Example}

% section number makes up part of numbered elements like theorems, equations, etc.
%\numberwithin{equation}{section}  

\begin{document}

% final version PEMS 16/2/2017
\begin{abstract}
We study some basic properties of the class of universal operators on Hilbert space,
and provide new examples of universal operators and universal pairs.
\end{abstract}

\thanks{First author is supported by the Magnus Ehrnrooth Foundation in Finland.}
\subjclass[2010]{47B37}
\keywords{Hilbert space, universal operator, universal commuting pair, composition operator}

\title[On universal operators and universal pairs]{On universal operators and universal pairs}

\author[R. Schroderus]{Riikka Schroderus}
\address{Department of Mathematics and Statistics\\ University of Helsinki, Box 68 \\ FI-00014 Helsinki, Finland}
\email{riikka.schroderus@helsinki.fi}

\author[H.-O. Tylli]{Hans-Olav Tylli}
\address{Department of Mathematics and Statistics\\ University of Helsinki, Box 68 \\ FI-00014 Helsinki, Finland}
\email{hans-olav.tylli@helsinki.fi}

\maketitle

\section{Introduction}

Let $\mathcal{L}(H)$ be the algebra of bounded linear operators on the separable infinite-dimensional Hilbert space $H$. Recall that the operators $T_1\in \mathcal{L}(H_1) $ and $T_2\in \mathcal{L}(H_2) $ are similar if there exists a linear isomorphism $J:H_1\longrightarrow H_2 $ such that $ T_1 =J^{-1}T_2 J$. The operator $U\in \mathcal{L}(H)$ is called \textit{universal} if for any $T\in \mathcal{L}(H)$ there exist a closed $U$-invariant subspace $M\subset H$, i.e. $U(M)\subset M$, and a constant $c\neq 0 $ such that the operators $ U_{|M}: M \longrightarrow M$ and $ c T : H \longrightarrow H$ are similar.

The concept of a universal operator was introduced by Rota \cite{Ro}, where he showed that the backward shift of infinite multiplicity is an explicit example of these seemingly strange objects. The invariant subspace problem provides motivation for studying concrete universal operators, namely, every operator in $\mathcal{L}(H)$ has a non-trivial invariant subspace if and only if the minimal non-trivial invariant subspaces of (any) universal operator are $1$-dimensional. General references for this approach and results about universal operators are \cite[Chapter 8]{CP} and the survey \cite{CG1}.

Later Caradus \cite{C} (see also \cite[Theorem 8.1.3]{CP}) exhibited a simple sufficient condition for an operator to be universal, namely, 

\begin{itemize}
\item[(C)] \textit{If $U \in \mathcal{L}(H)$ is such that the kernel $\textnormal{Ker}\, (U)$
is infinite-dimensional and its range $\textnormal{Ran}\,(U) = H$, then $U$ is universal for $H$.
}
\end{itemize}

\noindent However, Caradus' condition and its recent generalisation by Pozzi \cite{Po} 
are very far from being necessary.
In Section \ref{class} of  this paper we look more closely  at some fundamental  properties of  the class of universal operators, as well as some of its subclasses. In particular, we derive spectral theoretic consequences of universality which can be used to verify that a given operator is not universal. 

Because of its simplicity, condition (C) is often used to obtain examples of universal operators, even though the desired properties can be difficult to verify for many concrete operators.
By a celebrated example of Nordgren, Rosenthal and Wintrobe \cite{NRW} from 1987 
the operators $C_{\varphi} -\lambda I$ are universal on the Hardy space $H^2 (\mathbb{D})$, 
whenever the composition operator $f \longmapsto C_{\varphi}f = f\circ \varphi$ is induced by a hyperbolic automorphism $\varphi$ of the unit disc $\mathbb{D}$ and 
$\lambda$ belongs to the interior of the spectrum of  $C_{\varphi}$.
In this case the infinite-dimensionality of $\textnormal{Ker}\, (C_{\varphi} -\lambda I)$
is verified by explicit computation, but the original argument for the surjectivity 
relies on fairly sophisticated properties of multiplication operators induced by 
certain Blaschke products in $H^2 (\mathbb{D})$. 
Only very recently an alternative argument for the universality of
$C_{\varphi} -\lambda I$ on $H^2 (\mathbb{D})$ was given in  \cite{CG2}.
For other concrete examples of universal operators, see e.g. \cite{PP, Po, CG0, CG2a}.
Moreover, the connection between the invariant subspace problem and universality has motivated recent work on the lattice of invariant subspaces of $C_{\varphi}$ on $H^2 (\mathbb{D})$ for
hyperbolic automorphisms $\varphi$, see e.g. \cite{Ma1}  and \cite{GG}. 

In Section \ref{secunivex} we show that the adjoint $C_{\varphi}^{*} -\overline{\lambda} I$ is universal on $S^2 (\mathbb{D})$, the Hilbert space consisting of analytic functions $f: \mathbb{D} \longrightarrow\mathbb{C}$ such that $f'\in H^2 (\mathbb{D})$, whenever $\lambda$ is an interior point of the spectrum of $C_{\varphi}$ on $S^2 (\mathbb{D})$.
It follows from known results that $C_{\varphi} -\lambda I$ is not a universal operator on $S^2 (\mathbb{D})$, for any $\lambda \in \mathbb{C}$, which suggests that universality
passes to the adjoint for small enough spaces in the scale of weighted Dirichlet spaces of analytic functions on $\mathbb{D}$. 

Recently M\"uller \cite{Mu2} introduced a notion of universality for commuting $n$-tuples 
of operators, and he obtained versions of the sufficient condition (C) in this setting. 
However, examples of universal commuting  $n$-tuples are rather more difficult to exhibit 
compared to the case of a single operator, and in Section \ref{pairs} we discuss new
 concrete examples of universal commuting  pairs $(U_1, U_2)\in \mathcal{L}(H)^2$.
In particular, we show that certain pairs $(L_A,R_B)$ of left and right multiplication operators 
on the ideal of the Hilbert-Schmidt operators are universal and consider the case of 
universal NRW-pairs $(C_{\varphi} -\lambda I, C_{\psi} -\mu I)$ in 
$\mathcal{L}\big(H^2 (\mathbb{D})\big)^2$.

\section{Structure of the class of universal operators}\label{class}

The main interest has been in exhibiting and analysing concrete examples of universal operators 
belonging to various classes of operators, and less attention has been paid to general properties 
of the full class 
\[
\mathcal U(H) = \{U \in \mathcal{L}(H): U\  \textrm{is universal}\}.
\]
In this section we  systematically consider 
$\mathcal U(H)$ and some of its subclasses. Clearly $\mathcal U(H_1)$ and 
$\mathcal U(H_2)$ are related by similarity whenever $H_1$ and $H_2$
are separable infinite-dimensional Hilbert spaces, so the particular
realisation of the Hilbert space $H$ is immaterial.  
We will use the notation 
 $B_\infty: (\oplus_{\mathbb{Z}_{+}} \ell^2)_{\ell^2} \to (\oplus_{\mathbb{Z}_{+}} \ell^2)_{\ell^2}$ for 
Rota's universal model operator, 
\[
B_\infty x = B_\infty(x_0,x_1,\ldots) = (x_1, x_2, \ldots), 
\]
for $x = (x_n)_{n\in \mathbb{Z}_{+}} \in (\oplus_{\mathbb{Z}_{+}} \ell^2)_{\ell^2}$, 
where $x_n \in \ell^2$ for any $n \ge 0$.
The universality of the backward shift $B_\infty$ of infinite multiplicity 
on  $(\oplus_{\mathbb{Z}_{+}} \ell^2)_{\ell^2}$ is immediate from  (C),
but the original argument by Rota \cite{Ro}  is quite direct.

It was pointed out in \cite[p. 44]{CG1} that the precise relationship between universality and 
condition (C) is somewhat circular:  $U \in \mathcal U(H)$ if and only if there is a $U$-invariant 
infinite-dimensional subspace $M \subset H$ so that the restricted operator
$U_{|M}: M \to M$ satisfies condition (C). 
This is seen by recalling that the restriction of 
any  $U \in \mathcal U(H)$ to some invariant subspace is similar 
to $cB_\infty$ for some $c \neq 0$, 
combined with an observation of Pozzi recalled separately as Proposition \ref{Poz} below. 
To state the proposition in a convenient form we write
operators $V \in \mathcal{L}(H)$ with respect to direct sum decompositions 
$H = M \oplus M^{\perp}$
as vector-valued operator matrices
\[
V = \left( \begin{array}{ccc}
V_{1,1} & V_{1,2}\\
V_{2,1} & V_{2,2}\\
\end{array} \right)
\]
in the obvious fashion. Thus $V_{1,1} = PV_{|M}$, $V_{1,2} = PV_{|M^{\perp}}$,
$V_{2,1} = QV_{|M}$ and $V_{2,2} = QV_{|M^{\perp}}$,
where $P: H \to M$ and $Q = I - P: H \to M^{\perp}$ are the related orthogonal projections.
The following fact allows to construct examples of universal operators 
having different properties on direct sums $H \oplus H$.

\begin{prop}\label{Poz} (\cite[Remark 1.4]{Po}) 
Let $H = M \oplus M^{\perp}$, where $M \subset H$ is an infinite-dimensional subspace,
and let
\[
V = \left( \begin{array}{ccc}
U & A\\
0 & B\\
\end{array} \right) \in \mathcal{L}(H)
\]
as above.
Suppose that $U \in \mathcal{L}(M)$ is a universal operator for $M$. 
Then $V \in \mathcal U(H)$ for any operators $A$ and $B$.
\end{prop}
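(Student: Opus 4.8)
The plan is to exploit the triangular form of $V$: because the $(2,1)$-block vanishes, the subspace $M$ is $V$-invariant and the restriction $V_{|M}$ coincides with $U$. Indeed, for $m \in M$ we have $Vm = Um \oplus 0 \in M$, so $V(M) \subseteq M$ and $V_{|M} = U$; the off-diagonal block $A$ and the corner $B$ play no role here. Consequently, any closed $U$-invariant subspace of $M$ is automatically a closed $V$-invariant subspace of $H$ on which $V$ acts exactly as $U$ does.

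Next I would transfer the universality of $U$ for $M$ to operators on all of $H$. Since $M$ is infinite-dimensional and closed in the separable Hilbert space $H$, it is itself a separable infinite-dimensional Hilbert space, so we may fix a linear (indeed unitary) isomorphism $W \colon H \to M$. Given an arbitrary $T \in \mathcal{L}(H)$, consider $S := WTW^{-1} \in \mathcal{L}(M)$. Because $U$ is universal for $M$, there are a closed $U$-invariant subspace $N \subseteq M$ and a scalar $c \neq 0$ such that $U_{|N}$ and $cS$ are similar. On the other hand $cS = W(cT)W^{-1}$, so $cS$ and $cT$ are similar; hence $U_{|N}$ and $cT$ are similar.

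Finally I would assemble the pieces: by the first step $N$ is a closed $V$-invariant subspace of $H$ with $V_{|N} = U_{|N}$, which by the second step is similar to $cT$. As $T \in \mathcal{L}(H)$ was arbitrary, this shows $V \in \mathcal U(H)$. There is no real obstacle in this argument; the only point requiring a word of care is the passage between operators on $M$ and operators on $H$, which is harmless precisely because universality does not depend on the particular realisation of the underlying separable infinite-dimensional Hilbert space, as noted above. (In particular the hypotheses impose no condition whatsoever on $M^{\perp}$, which may be finite- or infinite-dimensional, nor on the operators $A$ and $B$.)
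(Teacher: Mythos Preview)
Your proof is correct and follows essentially the same approach as the paper's: both arguments note that the upper-triangular form gives $V_{|M}=U$ (hence any $U$-invariant $N\subset M$ is $V$-invariant with $V_{|N}=U_{|N}$), and both transfer $T\in\mathcal{L}(H)$ to an operator on $M$ via a fixed isomorphism between $H$ and $M$ before invoking the universality of $U$. Your $W\colon H\to M$ is simply the inverse of the paper's isometry $J_0\colon M\to H$, and the arguments are otherwise identical.
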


\begin{proof}
If $T \in \mathcal{L}(H)$ is given there is, by assumption, a
$U$-invariant subspace $N \subset M$,
and $c \neq 0$ such that $U_{|N}: N\to N $ and $ cT : H\to H$ are similar. Fix an isometry $J_0: M \to H$. We have that $U_{|N}$ is similar to $c J_0^{-1}TJ_0$. 
Since $N \subset M$ we get that $N$ is  $V$-invariant,
$V_{|N} = U_{|N}$ is similar to $c J_0^{-1}TJ_0$
and consequently also to $cT$.
\end{proof}

We are interested in conditions that enable us to decide 
whether a given concrete operator is universal or not, and 
we first consider spectral criteria. Let $\sigma(S; H)$ denote the spectrum of 
$S \in \mathcal{L}(H)$. 
The spectrum of a diagonal sum of operators on $H_1 \oplus H_2$ satisfies 
\[
\sigma\Big( \left( \begin{array}{ccc}
U & 0\\
0 & B\\
\end{array} \right); H_1 \oplus H_2\Big)
= \sigma(U; H_1) \cup \sigma(B; H_2)
\]
for any  $U \in \mathcal{L}(H_1)$ and $B \in \mathcal{L}(H_2)$.
It follows from Proposition \ref{Poz} that there is no general characterisation of universal operators purely in terms of their spectra.
Nevertheless, the universality of $U \in \mathcal U(H)$ does have relevant consequences 
for various subsets of the spectrum of $U$.
For this recall the classes of semi-Fredholm operators
\begin{align*}
\Phi_-(H) & = \{S \in \mathcal{L}(H): \dim \, (H/\textnormal{Ran}\, (S)) < \infty\},\\ 
\Phi_+(H) & = \{S \in \mathcal{L}(H): \dim \, (\textnormal{Ker}\,(S)) < \infty, \textnormal{Ran}\,(S)\  \textrm{is closed}\},
\end{align*} 
where $\Phi(H) = \Phi_+(H) \cap \Phi_-(H)$ consists of the Fredholm operators.
Operators $S \in \Phi_+(H)$ cannot be universal, since clearly $\textnormal{Ker}\,(S)$
has to be infinite-dimensional for $S$ to be an universal operator.
We will need the $\Phi_+$-spectrum of $S \in \mathcal{L}(H)$ defined as
\[
\sigma_e^+(S; H) = \{\lambda \in \mathbb C:  S -\lambda I\notin \Phi_+(H)\}.
\]
It is known \cite[Chapter III.19]{Mu} that  $\sigma_e^+(S; H)$ is a 
non-empty compact subset of  the essential spectrum
\[
\sigma_e(S; H) = \{\lambda \in \mathbb C:  S -\lambda I \notin \Phi(H)\}
\]
of $S$. Furthermore, let $\sigma_p(S; H)$ denote the point spectrum of $S$.

It follows from the definition of universality that Riesz operators $S \in \mathcal{L}(H)$
can not be universal. (Recall that $S$ is a Riesz  operator if $\sigma_e(S; H) = \{0\}$.)  
The following result reveals some further common spectral properties of universal operators.

\begin{thm}\label{spec}
Let $U \in \mathcal U(H)$ be an arbitrary universal operator. 
Then the following hold:
\begin{itemize}
\item[(i)]  There is $r > 0$ such that  the open disk 
\[
B(0,r) \subset \sigma_p(U; H) \cap \sigma_e^+(U; H) \subset \sigma_e(U; H) 
\subset \sigma(U; H),
\]
and, moreover, any $\lambda \in B(0,r)$ is an eigenvalue of $U$ having infinite multiplicity.
In particular, if  $U \in \mathcal U(H)$ then $0$ is an interior point
of any of the sets $\sigma_e^+(U; H)$, $\sigma_e(U; H)$, $\sigma_p(U; H)$ as well as  
$\sigma(U; H)$. 

\item[(ii)] There is $r > 0$ and a vector-valued holomorphic map
$z \mapsto y_z: B(0,r) \to H$ for which
\[
Uy_z = z y_z, \quad z  \in B(0,r).
\]

\end{itemize}
\end{thm}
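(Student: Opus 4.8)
The plan is to reduce everything to Rota's model operator $B_\infty$ and then transport the relevant spectral data through the similarity. By the definition of universality (applied, say, to $T = I_H$, or more simply recalling the standard fact quoted in the introduction that the restriction of any universal operator to a suitable invariant subspace is similar to $cB_\infty$ for some $c \neq 0$), there is a closed $U$-invariant subspace $M \subseteq H$ and an isomorphism $J: (\oplus_{\mathbb Z_+}\ell^2)_{\ell^2} \to M$ with $U_{|M} = c J B_\infty J^{-1}$ for some $c \neq 0$. The key elementary computation is that for every scalar $z$ with $|z| < 1$, the element $e_z = (z^n e)_{n \in \mathbb Z_+}$ (where $e \in \ell^2$ is any fixed unit vector) lies in $(\oplus_{\mathbb Z_+}\ell^2)_{\ell^2}$ and satisfies $B_\infty e_z = z\, e_z$; moreover the whole $\ell^2$-worth of such vectors (one for each choice of $e$) are linearly independent, so each $z \in B(0,1)$ is an eigenvalue of $B_\infty$ of infinite multiplicity. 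Hence each $z$ with $|z| < |c|$ is an eigenvalue of $cB_\infty$ of infinite multiplicity, and since $M$ is $U$-invariant, the vector $y_z := J e_{z/c} \in M \subseteq H$ satisfies $U y_z = (U_{|M}) y_z = c J B_\infty J^{-1} J e_{z/c} = c J (z/c) e_{z/c} = z\, y_z$. Setting $r = |c|$ gives (ii) once we check holomorphy.

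For part (i): the eigenvector relation $U y_z = z y_z$ with $y_z \neq 0$ shows immediately $B(0,r) \subseteq \sigma_p(U;H)$, and infinite-dimensionality of $\ker(cB_\infty - zI)$ together with the isomorphism $J$ shows each such $z$ is an eigenvalue of $U$ of infinite multiplicity (the image under $J$ of the infinite-dimensional eigenspace of $cB_\infty$ sits inside $\ker(U - zI)$, though one should note the $U$-eigenspace in $H$ could a priori be even larger — all we need is that it is infinite-dimensional). In particular $\ker(U - zI)$ is infinite-dimensional for $z \in B(0,r)$, so $U - zI \notin \Phi_+(H)$, i.e. $B(0,r) \subseteq \sigma_e^+(U;H)$. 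The remaining inclusions $\sigma_e^+(U;H) \subseteq \sigma_e(U;H) \subseteq \sigma(U;H)$ are standard ($\Phi(H) \subseteq \Phi_+(H)$ gives the first, and Fredholm operators being invertible modulo compacts — in particular bounded below modulo compacts and having closed range — cannot be so if non-invertible... more precisely $S-\lambda I$ invertible implies Fredholm of index $0$, so $\sigma_e \subseteq \sigma$). The "interior point" statement is then immediate since $0 \in B(0,r)$ and $B(0,r)$ is open and contained in each listed set.

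It remains to establish the holomorphy claim in (ii). Here the clean approach is to exhibit a single holomorphic $H$-valued map realizing the eigenvectors. Fix a unit vector $e \in \ell^2$ and, working on the ball $B(0,|c|)$, set $y_z = J\big( ((z/c)^n e)_{n \in \mathbb Z_+} \big)$. Since $J$ is bounded linear, it suffices to show $z \mapsto ((z/c)^n e)_{n}$ is holomorphic from $B(0,|c|)$ into $(\oplus_{\mathbb Z_+}\ell^2)_{\ell^2}$; this follows because its partial sums $\sum_{n=0}^{N}(z/c)^n e \otimes \delta_n$ are (vector-valued) polynomials in $z$ converging locally uniformly on $B(0,|c|)$ in norm (the tail $\sum_{n > N}|z/c|^{2n}$ is locally uniformly small), and a locally uniform limit of holomorphic vector-valued maps is holomorphic. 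Thus $z \mapsto y_z$ is holomorphic and $U y_z = z y_z$ on $B(0,r)$ with $r = |c|$, as desired.

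The main obstacle — really the only non-bookkeeping point — is pinning down the similarity $U_{|M} \sim cB_\infty$ in the exact form needed and keeping the two roles of the direct-sum Hilbert space straight (the abstract $(\oplus_{\mathbb Z_+}\ell^2)_{\ell^2}$ on which $B_\infty$ acts, versus $H$ itself); once the explicit eigenvectors $e_z$ of $B_\infty$ are written down, everything else is transport of structure along $J$ plus the routine vector-valued holomorphy estimate above.
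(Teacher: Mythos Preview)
Your proposal is correct and follows essentially the same route as the paper: both reduce to the explicit eigenvectors $(z^n x_0)_{n\ge 0}$ of $B_\infty$, transport them through the similarity $U_{|M} = cJB_\infty J^{-1}$, and set $y_z = J(x_{z/c})$. The only cosmetic difference is that the paper verifies holomorphy of $z \mapsto x_z$ weakly (pairing against arbitrary $y \in H_0$ gives a power series), whereas you argue via locally uniform norm-convergence of the partial sums; since weak and strong holomorphy coincide for Banach-space-valued maps, the two arguments are interchangeable.
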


\begin{proof}
We first recall some well-known spectral properties of the backward shift
$B_\infty$ on the direct sum $H_0 \equiv (\oplus_N \ell^2)_{\ell^2}$. 
Let $\vert z \vert< 1$ and fix the non-zero vector $x_0 \in \ell^2$, whence 
the sequence $x_z = (z^nx_0)_{n\ge 0} = (x_0,zx_0, \ldots) \in H_0$. Clearly
\[
B_\infty(x_z) = (zx_0,z^2x_0, \ldots) = zx_z,
\]
so that any $\vert z \vert< 1$ is an eigenvalue of infinite multiplicity for $B_\infty$,
that is $z \in \sigma_e^+(B_\infty; H_0)$. 
Moreover, $z \mapsto x_z$ is a (weakly) holomorphic map
$B(0,1) \to H_0$, since 
\[
z  \mapsto \langle x_z,y \rangle = \sum_{n=0}^\infty z^n \langle x_0,y_n \rangle
\]
is analytic for any $y = (y_n) \in H_0$, where $\langle \cdot,\cdot \rangle$ 
denotes the respective inner-product.

Let $U$ be an arbitrary universal operator on $H$.
By assumption  there is a constant $c \neq 0$, a $U$-invariant subspace 
$M \subset H$ and a linear isomorphism $J:  H_0 \to M$ so that
 $U_{|M} = J(cB_\infty)J^{-1}$. 
Since eigenvalues are preserved under similarity we get that
\[
B(0,\vert c\vert) \subset  \sigma_p(cB_\infty; H_0) =
\sigma_p(U_{|M}; M) \subset \sigma(U; H).
\]
Towards the related claim for  $\sigma_e^+(U; H)$ one obtains instead that
\[
B(0,\vert c\vert) \subset \sigma_e^+(cB_\infty; H_0) =
\sigma_e^+(U_{|M}; M)  \subset \sigma_e^+(U; H).
\]
For the right-hand inclusion note e.g. that  
$Ker(\lambda I_M - U_{|M}) \subset Ker(\lambda I - U)$, 
where the left-hand  subspace is infinite-dimensional by similarity, since  
 $\dim \, (Ker(\lambda I - cB_\infty)) = \infty$. 

Finally, the above identities $B_\infty x_z = z  x_z$
and $cJB_\infty = (U_{|M})J$ imply that 
\[
U(Jx_z) = cJB_\infty (x_z) = c z J(x_z), \quad \vert z \vert < 1.
\]
It follows that the renormalised holomorphic map $z \mapsto y_z \equiv J(x_{z/c})$ 
satisfies condition (ii) in the disk $B(0, \vert c \vert)$.
\end{proof}

We next state some typical applications of the preceding result.

\begin{cor}\label{int}
The operator $T \in \mathcal{L}(H)$ can not be universal if any  
of the following conditions holds:
\begin{itemize}
\item[(i)] the interior $int \,(\sigma_p(T; H)) = \emptyset$, 
\item[(ii)]the interior $int \,(\sigma_e(T; H)) = \emptyset$,
\item[(iii)] every non-zero eigenvalue $\alpha \in \sigma_p(T; H)$ has finite multiplicity.
\end{itemize}
\end{cor}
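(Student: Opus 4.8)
The plan is to derive all three non-universality criteria directly from Theorem \ref{spec} by contraposition, so the work reduces to unpacking what that theorem forces on the spectrum of a universal operator.

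For (i), suppose toward a contradiction that $T \in \mathcal{U}(H)$. By Theorem \ref{spec}(i) there is $r>0$ with $B(0,r) \subset \sigma_p(T; H)$, and an open disk is certainly an open subset of the plane, so $0 \in \textnormal{int}\,(\sigma_p(T; H))$; in particular the interior is non-empty. Thus the hypothesis $\textnormal{int}\,(\sigma_p(T; H)) = \emptyset$ rules out universality. The argument for (ii) is verbatim the same, using the inclusion $B(0,r) \subset \sigma_e(T; H)$ from Theorem \ref{spec}(i) in place of the point-spectrum inclusion. For (iii), again assume $T \in \mathcal{U}(H)$: by the last clause of Theorem \ref{spec}(i) there is $r>0$ such that every $\lambda \in B(0,r)$ is an eigenvalue of $T$ of infinite multiplicity, so in particular $T$ has a non-zero eigenvalue of infinite multiplicity, contradicting the hypothesis in (iii). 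Hence each of the three conditions is incompatible with universality.

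There is essentially no obstacle here: the corollary is a direct translation of Theorem \ref{spec}(i) into contrapositive form, and the only thing to be slightly careful about is making sure that in (iii) the disk $B(0,r)$ genuinely contains non-zero points (it does, being a non-degenerate open disk), so that the conclusion is about a \emph{non-zero} eigenvalue as the statement requires. One could phrase all three parts in a single sentence, but it is cleaner to spell out the three appeals to Theorem \ref{spec}(i) separately.

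\begin{proof}
In each case we argue by contraposition, assuming $T \in \mathcal U(H)$ and invoking Theorem \ref{spec}(i), which provides an $r > 0$ with
\[
B(0,r) \subset \sigma_p(T; H) \cap \sigma_e^+(T; H) \subset \sigma_e(T; H),
\]
and such that every $\lambda \in B(0,r)$ is an eigenvalue of $T$ of infinite multiplicity.

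(i) Since $B(0,r)$ is a non-empty open set contained in $\sigma_p(T; H)$, we get $0 \in \textnormal{int}\,(\sigma_p(T; H))$, so $\textnormal{int}\,(\sigma_p(T; H)) \neq \emptyset$. This contradicts the hypothesis, so $T$ is not universal.

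(ii) Likewise $B(0,r) \subset \sigma_e(T; H)$ forces $0 \in \textnormal{int}\,(\sigma_e(T; H))$, contradicting $\textnormal{int}\,(\sigma_e(T; H)) = \emptyset$.

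(iii) The open disk $B(0,r)$ contains points $\alpha \neq 0$, and each such $\alpha$ is an eigenvalue of $T$ of infinite multiplicity by Theorem \ref{spec}(i). This contradicts the assumption that every non-zero eigenvalue of $T$ has finite multiplicity. Hence $T$ is not universal.
\end{proof}
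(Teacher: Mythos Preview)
Your proof is correct and matches the paper's approach: the paper states this corollary without proof, treating it as an immediate consequence of Theorem \ref{spec}, and your argument simply spells out the contrapositive in each case exactly as intended.
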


Another immediate consequence of Theorem \ref{spec} which will be useful in Section \ref{secunivex} reads as follows.

\begin{cor}\label{bdry} Suppose that $T\in \mathcal{L} (H)$ and $\lambda \in \partial \sigma (T; H)$. Then $T-\lambda I$ can not be universal. In particular, if $ \sigma (T; H) = \partial \sigma (T; H)$, then $T-\lambda I$ is not universal for any $\lambda \in \mathbb{C}$.
\end{cor}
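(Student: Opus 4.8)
The plan is to deduce this directly from Theorem \ref{spec}(i) by exploiting the behaviour of the spectrum under the translation $T \mapsto T - \lambda I$. First I would recall the elementary spectral mapping fact that $\sigma(T - \lambda I; H) = \sigma(T; H) - \lambda$, and hence $0 \in \sigma(T-\lambda I; H)$ precisely when $\lambda \in \sigma(T;H)$, and—what matters here—$0$ is a boundary point of $\sigma(T - \lambda I; H)$ precisely when $\lambda \in \partial\sigma(T;H)$, since translation is a homeomorphism of $\mathbb{C}$ and therefore maps topological boundary to topological boundary. So the hypothesis $\lambda \in \partial\sigma(T;H)$ says exactly that $0 \in \partial\sigma(T-\lambda I; H)$.

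Next I would invoke Theorem \ref{spec}(i): if $T - \lambda I$ were universal, then $0$ would be an interior point of $\sigma(T-\lambda I; H)$ — indeed there would be $r>0$ with $B(0,r) \subset \sigma(T-\lambda I; H)$. This contradicts $0 \in \partial\sigma(T-\lambda I; H)$, because a boundary point of a set cannot be an interior point of that set (every neighbourhood of a boundary point meets the complement, while an interior point has a neighbourhood contained in the set). Hence $T - \lambda I$ is not universal, which is the first assertion.

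For the second assertion, suppose $\sigma(T;H) = \partial\sigma(T;H)$. Fix any $\mu \in \mathbb{C}$. If $\mu \notin \sigma(T;H)$, then $0 \notin \sigma(T - \mu I; H)$, so $T - \mu I$ is invertible and in particular not universal (a universal operator has infinite-dimensional kernel, hence is not injective). If $\mu \in \sigma(T;H) = \partial\sigma(T;H)$, then the first assertion applies directly with $\lambda = \mu$. Either way $T - \mu I$ is not universal, as required.

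The argument is essentially a bookkeeping exercise once Theorem \ref{spec}(i) is in hand, so there is no real obstacle; the only point requiring a line of justification is the topological claim that translation preserves boundaries and that a point cannot simultaneously be a boundary point and an interior point of the same set — both standard. One could alternatively phrase the proof using Corollary \ref{int}, but routing it through the "$0$ is interior to the spectrum" clause of Theorem \ref{spec}(i) is the cleanest.
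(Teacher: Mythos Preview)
Your proposal is correct and follows exactly the route the paper intends: the corollary is stated there as an immediate consequence of Theorem~\ref{spec}(i), with no separate proof given, and your argument simply spells out that immediate deduction via the translation $\sigma(T-\lambda I;H)=\sigma(T;H)-\lambda$. The case split in the second assertion (invertible versus boundary) is the natural way to handle arbitrary $\mu\in\mathbb{C}$ and matches what the paper has in mind.
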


We next consider general properties of the class $\mathcal U(H)$  of the universal operators.
Recently Pozzi \cite[Thm. 3.8]{Po} extended Caradus' condition (C)  
as follows:
\begin{itemize}
\item[(C$_+$)] \textit{If $U \in \mathcal{L}(H)$ satisfies $\dim \, Ker(U) = \infty$
and $\dim \,(H/\textnormal{Ran}\,(U))  < \infty$, 
then $U \in \mathcal U(H)$.}
\end{itemize}
It is helpful for comparative purposes to introduce the subclasses 
\begin{align*}
\mathcal UC(H) & = \{U \in \mathcal{L}(H): U\  \textrm{satisfies}\  (C)\},\\
\mathcal UC_+(H) & = \{U \in \mathcal{L}(H): U\  \textrm{satisfies}\ (C_+)\}
\end{align*}
of $\mathcal U(H)$. 
Observe that $\mathcal UC_+(H) = \Phi_-(H) \setminus \Phi_+(H)$.
Hence it follows from the classical perturbation theory for semi-Fredholm operators that
$\mathcal UC_+(H)$
is preserved under sufficiently small operator norm perturbations 
as well as compact perturbations, see \cite[Thm. 4.2]{CPY} or 
\cite[Thms. III.16.18 and III.16.19]{Mu}.
In particular,
 $U + K \in \mathcal UC_+(H)$ whenever 
$U \in \mathcal UC(H)$ and $K$ is a compact operator. 
In the sequel, we denote by $ \mathcal{K}(H)$ the closed ideal of $ \mathcal{L}(H)$ 
consisting of compact operators on $H$.
Moreover, in \cite[Thm. 2]{CG0} the authors obtained by direct means 
a perturbation result for the class $\mathcal UC(H)$
which contains more detailed information. 
We also recall that the universal model operator $B_\infty$ has the stronger property that
its restrictions represent suitable multiples $cT$ up to unitary equivalence for any 
$T \in \mathcal{L}(H)$, see e.g. \cite[Thm. 8.1.5]{CM} or \cite[Chapt. 1.5]{RR}.

It is evident from Proposition \ref{Poz} that the subclasses  $\mathcal UC(H)$ 
and $\mathcal UC_+(H)$ 
are much smaller than $\mathcal U(H)$, and $\mathcal U(H)$ 
contains operators very different from $B_\infty$. Moreover, 
$\mathcal{U}C(H)$ is not preserved by compact perturbations.
For the record we include related very simple examples.

\begin{example}\label{notinc} 
(i) Let $U\in \mathcal{U}(H)$, so that 
\[
V = \left( \begin{array}{ccc}
U & 0\\
0 & B\\
\end{array} \right) 
\] 
is a universal operator on $H\oplus H$ for any $B \in \mathcal{L}(H)$
by Proposition \ref{Poz}. For instance, 
if  $B(H)$ has infinite codimension in $H$, then
 $\textnormal{Ran}(V) = U(H)+ B(H)$ has infinite codimension in $H\oplus H$, and if 
 $\textnormal{Ran} \, (B)$ is not closed, then $\textnormal{Ran} \, (V)$ is not closed either.

(ii) Define $U \in \mathcal L(\ell^2)$ by $Ue_{2n}=e_n$ and $Ue_{2n+1}=0$ for $n\in \mathbb{N}$,
so that $U\in \mathcal UC(\ell^2)$. Let $K \in \mathcal K(\ell^2)$ be the rank-$1$ operator defined by $Ke_2 = -e_1$ and $Ke_n=0$ for $n\neq 2$. Since $(U+K)e_2 = 0$ it follows that 
$e_1\notin (U+K)(\ell^2)$, so that $U+K \notin \mathcal UC(\ell^2)$ 
(even though $U+K \in \mathcal UC_+(\ell^2)$).

\end{example}

Explicit examples demonstrate similarly that the full class
$\mathcal U(H)$ of universal operators is neither  open in the operator norm 
nor invariant under  compact perturbations.

\begin{example}\label{pert}
 Fix a universal operator $U \in \mathcal U(H)$.  Proposition \ref{Poz} implies that
\[
V = \left( \begin{array}{ccc}
U & I_H\\
0 & 0\\
\end{array} \right)
\]
is a universal operator on $H \oplus H$, where $I_H$
is  the identity map of $H$. Consider the sequence $(V_n) \subset \mathcal L(H \oplus H)$ defined by
\[
V_n = \left( \begin{array}{ccc}
U & I_H\\
\frac{1}{n}I_H & 0\\
\end{array} \right),
\]
that is, $V_n(x,y) = (Ux+y,\frac{1}{n}x)$ for $(x,y) \in H \oplus H$.
Note that $V_n$ is not universal on $H \oplus H$ for any
$n \in \mathbb{N}$, since $Ker(V_n) = \{(0,0)\}$. In fact, 
$V_n(x,y) = (Ux+y,\frac{1}{n}x) = (0,0)$ yields that $x = 0$ and 
$y = -Ux = 0$. Clearly 
$\Vert V_n - V\Vert = \frac{1}{n}$ for $n \in \mathbb N$,
so that   $V \in \mathcal U(H \oplus H)$ is not an interior point of
$\mathcal U(H \oplus H)$.

Furthermore, let   $K \in \mathcal K(H)$ be the diagonal operator defined by 
$Kf_n = \frac{1}{n}f_n$ for $n \in \mathbb{N}$,
where $(f_n)$ is some fixed orthonormal basis of $H$. 
Consider 
\[
W = \left( \begin{array}{ccc}
U & I_H\\
K & 0\\
\end{array} \right) 
=  V +
\left( \begin{array}{ccc}
0 & 0\\
K & 0\\
\end{array} \right) \in \mathcal L(H \oplus H),
\]
that is, $W(x,y) = (Ux + y,Kx)$ for $(x,y) \in H \oplus H$.
Thus $W$ is a compact perturbation of  $V \in \mathcal U(H \oplus H)$, but
$W$ is not a universal operator, since again 
$Ker(W) = \{(0,0)\}$.
\end{example}

It follows from the algebraic semi-group property of $\Phi_-(H)$, 
see \cite[Thm. III.16.5]{Mu}, that the subclass
$\mathcal UC_+(H)$ is multiplicative in the sense that 
$UV \in \mathcal UC_+(H)$ whenever $U, V \in \mathcal UC_+(H)$ (and this property 
is obvious 
for $\mathcal UC(H)$). Multiplicativity easily fails for the class $\mathcal U(H)$.
In fact, fix $U_0, V_0 \in \mathcal U(H)$.
Then 
$U = \left( \begin{array}{ccc}
U_0 & 0\\
0 & 0\\
\end{array} \right)$ and 
$V =  \left( \begin{array}{ccc}
0 & 0\\
0 & V_0\\
\end{array} \right)$ belong to $\mathcal U(H \oplus H)$
by Proposition \ref{Poz}, but $UV = 0$.

\section{Universality of the adjoint $C_{\varphi}^{*}- \overline{\lambda} I$ on $S^2 (\mathbb{D})$}\label{secunivex}

Recall that Nordgren, Rosenthal and Wintrobe \cite{NRW} showed that 
the operators $C_{\varphi} -\lambda I$ are universal on the Hardy space 
$H^2 (\mathbb{D})$ for any hyperbolic automorphism $\varphi$ of the unit disc  $\mathbb{D}$ 
and $\lambda \in int \,(\sigma(C_{\varphi}; H^2 (\mathbb{D})))$. Here 
$C_{\varphi}$ is the composition operator $f \mapsto f\circ \varphi$.
In this section we will discuss potential analogues of this result in the scale of weighted Dirichlet spaces 
$D_\beta(\mathbb{D})$, which are Hilbert spaces of analytic functions 
defined on the unit disc  $\mathbb{D}$.
Our main observation (Theorem \ref{thmadj})
is that  the \textit{adjoint} $C_{\varphi}^{*} - \overline{\lambda} I$
is universal on the space $S^2 (\mathbb{D})$, 
whenever $\varphi$ is a hyperbolic automorphism of 
 $\mathbb{D}$ and $\lambda \in int \,(\sigma \big( C_{\varphi}; S^2(\mathbb{D})))$. 
Here $S^2(\mathbb{D})$ is the Hilbert space consisting of the analytic functions 
$f: \mathbb{D} \longrightarrow\mathbb{C}$ such that $f'\in H^2 (\mathbb{D})$, whence 
$S^2(\mathbb{D})$ is continuously embedded in the classical Dirichlet space 
$\mathcal{D}^2$. 

Recall for $\beta \in \mathbb{R}$ that the weighted Dirichlet space 
$\mathcal{D}_{\beta}(\mathbb{D})$ is the Hilbert space of analytic functions 
$f(z)=\sum_{n=0}^{\infty} a_n z^n$ that satisfy 
\[
\Vert f\Vert^2_{\beta} = \sum_{n=0}^{\infty}  \vert a_n \vert^2 (n+1)^{2\beta} < \infty.
\] 
(These spaces are also special cases of the weighted Hardy spaces.)
The Hardy space $H^2 (\mathbb{D})$ is obtained for $\beta = 0$, 
the Bergman space
$A^2 (\mathbb{D})$ for $\beta = - 1/2$, the Dirichlet space $\mathcal{D}^2$ for $\beta = 1/2$
and $S^2 (\mathbb{D})$ for $\beta = 1$ (possibly up to an equivalent norm). 
We also recall that there is a continuous embedding 
$D_\beta(\mathbb{D}) \subset D_\alpha(\mathbb{D})$  for $\alpha < \beta$.
The reference \cite[Chapter 2.1]{CM} contains more background
about these spaces.

It will be enough for our purposes to consider the 
normalized hyperbolic automorphisms of $\mathbb{D}$ that have the form
\begin{equation}\label{varphir}
\varphi_r (z)= \frac{z+r}{1+rz}, \quad r\in (0,1).
\end{equation} 
In fact, it is known that all other hyperbolic automorphisms of  $\mathbb{D}$ 
can be conjugated by automorphisms of $\mathbb{D}$ to the preceding normalised form. 
We will later need the fact that 
$$
\varphi_r^{-1} (z)= \varphi_{-r} (z)= \frac{z-r}{1-rz} ,
$$
belongs to the same conjugacy class as $\varphi_r$,  since 
$\varphi_{-r} = g \circ \varphi_r \circ g $, where 
$g(z) =-z $ for $z \in \mathbb{D}$. Hence 
$ 
C_{\varphi_{-r}} = C_{g}C_{\varphi_r}C_{g},
$
so that $C_{\varphi_r} $ and $C_{\varphi_{-r}}= C_{\varphi_r}^{-1}$ are similar operators.
For more information on linear fractional transformations in general, see e.g. \cite[Chapter 0]{Sh}, and on composition operators acting on spaces of analytic functions, see \cite{CM}
or  \cite{Sh}. 

The composition operators 
$C_{\varphi_{r}}$ are known to be bounded on $\mathcal{D}_{\beta}(\mathbb{D})$ for all 
$\beta \in \mathbb R$, see \cite[Chapter 3.1]{CM} and \cite{Z} for various ranges of $\beta$.
We will require the result that 
\begin{equation}\label{spectrum}
\sigma \big( C_{\varphi_r} ; S^2 (\mathbb{D})\big) = \sigma \big( C_{\varphi_r} ; H^2 (\mathbb{D}))
= 
\Big\{ \lambda \in \mathbb{C} : \Big(\frac{1-r}{1+r} \Big)^{1/2} \leq \vert\lambda\vert \leq \Big(\frac{1-r}{1+r} \Big)^{-1/2}\Big\}
\end{equation}
for all $0 < r < 1$. 
We refer to \cite[Thm. 7.4]{CM} for the Hardy space case and to \cite[Thm. 3.9]{GS} 
for the case $S^2(\mathbb{D}) = \mathcal{D}_1(\mathbb{D})$.
In the sequel we will denote the corresponding open annulus, 
i.e. the interior of the above spectrum, by
$$ 
\mathcal{A}_r  := \Big\{ \lambda \in \mathbb{C} : \Big(\frac{1-r}{1+r} \Big)^{1/2} < \vert\lambda\vert < \Big(\frac{1-r}{1+r} \Big)^{-1/2}\Big\}.
$$

We point out as an  initial motivation that 
$C_{\varphi_r} - \lambda I$ is not universal on 
any of the small weighted Dirichlet spaces contained in 
the classical Dirichlet space $\mathcal{D}^2 = \mathcal{D}_{1/2}(\mathbb{D})$.

\begin{example}\label{nouniv}
Let $0 < r < 1$. Then $C_{\varphi_r} - \lambda I$ is not universal on $\mathcal{D}_{\beta}(\mathbb{D})$ for any $\beta \ge 1/2$ and any $\lambda \in \mathbb{C}$.

In fact, for $\beta = 1/2$ it is known that $\sigma \big( C_{\varphi_{r}} ; \mathcal{D}^2\big) =\mathbb{T}$ by \cite[Thm.  3.2]{Hi}. Hence it follows from Corollary \ref{bdry} that neither 
$C_{\varphi_{r}} -\lambda I$ nor its adjoint $C_{\varphi_r}^{*} -\overline{\lambda} I$ 
is  universal on $\mathcal{D}^2$ for any 
$\lambda \in \mathbb{C}$. 

For $\beta > 1/2$ it follows from \cite[Thm. 3.9]{GS} and its proof that in this case
the point spectrum $\sigma_p \big(C_{\varphi_r};S^2 (\mathbb{D})\big) =\{1\}$, 
and moreover that 
$ \textup{Ker}\, \big(C_{\varphi_r} - I ; S^2 (\mathbb{D}) \big) =\mathbb{C}$.
Consequently  $\dim \textup{Ker}\, \big(C_{\varphi_r} -\lambda I ; S^2 (\mathbb{D})\big)$ 
is either $0$ (for $\lambda \neq 1$) or $1$ (for $\lambda = 1$), so 
Corollary \ref{int} yields that 
$C_{\varphi_{r}} -\lambda I$ can not be universal on the weighted Dirichlet spaces for 
any $\beta > 1/2$ and $\lambda \in \mathbb{C}$.
\end{example}

As a contrast we show in the main result of this section 
that the adjoint of $C_{\varphi_{r}} -\lambda I$ is universal  on $S^2 (\mathbb{D})$. 

\begin{thm}\label{thmadj}
Let $ \varphi_r$ be the hyperbolic automorphism of $\mathbb{D}$ defined by 
$\varphi_r (z)= \frac{z+r}{1+rz}$ for $r\in (0,1)$. Then 
$C_{\varphi_r}^{*} -\overline{\lambda} I $ is universal on $S^2 (\mathbb{D})$ 
for any $\lambda \in \mathcal{A}_r$. 
\end{thm}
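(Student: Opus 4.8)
The plan is to verify Caradus' condition (C) for the operator $C_{\varphi_r}^{*} - \overline{\lambda} I$ on $S^2(\mathbb{D})$ when $\lambda \in \mathcal{A}_r$: one must show that $\mathrm{Ker}\,(C_{\varphi_r}^{*} - \overline{\lambda} I)$ is infinite-dimensional and that $C_{\varphi_r}^{*} - \overline{\lambda} I$ is surjective onto $S^2(\mathbb{D})$. The surjectivity is the easier half and should be handled first via a duality/adjoint argument. Since $\lambda \in \mathcal{A}_r$ is an interior point of $\sigma(C_{\varphi_r}; S^2(\mathbb{D})) = \sigma(C_{\varphi_r}; H^2(\mathbb{D}))$, we have $\bar\lambda \in \mathrm{int}\,\sigma(C_{\varphi_r}^{*}; S^2(\mathbb{D}))$, so $C_{\varphi_r}^{*} - \overline{\lambda} I$ is not invertible; the point is to show its \emph{range} is still everything. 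Equivalently, $\overline{C_{\varphi_r}^{*} - \overline{\lambda} I}$ has closed range and $\mathrm{Ker}\,(C_{\varphi_r} - \lambda I; S^2(\mathbb{D})) = \{0\}$; the latter is plausible from the known structure of eigenfunctions of $C_{\varphi_r}$ (the eigenfunctions involve branches of $\big(\frac{1-z}{1+z}\big)^{s}$ which, for the relevant range of $s$, fail to lie in $S^2(\mathbb{D})$, exactly as in Example \ref{nouniv}). So I would first pin down precisely which spectral values $\lambda$ give eigenfunctions of $C_{\varphi_r}$ on $S^2(\mathbb{D})$, and then argue that for $\lambda$ in the open annulus $\mathcal{A}_r$ the range of $C_{\varphi_r}^{*} - \overline{\lambda} I$ is all of $S^2(\mathbb{D})$, most likely by combining the injectivity of $C_{\varphi_r} - \lambda I$ with a closed-range statement obtained from semi-Fredholm stability or a direct estimate.

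**The eigenvalue computation** for $C_{\varphi_r}^{*}$ is where the real work lies. The strategy is to exhibit an explicit infinite-dimensional family of eigenvectors of $C_{\varphi_r}^{*}$ in $S^2(\mathbb{D})$ for each $\bar\lambda$ with $\lambda \in \mathcal{A}_r$. A standard device: conjugate $\varphi_r$ by the Cayley-type map $\sigma(z) = \frac{1-z}{1+z}$ sending $\mathbb{D}$ to the right half-plane, where the hyperbolic automorphism becomes multiplication $w \mapsto \rho w$ with $\rho = \frac{1-r}{1+r}$; then $C_{\varphi_r}$ is (unitarily/similarly, up to weights) a weighted composition operator whose eigenfunctions are powers $w^{s}$ with $\rho^{s} = \lambda$, i.e. $s$ ranging over a vertical line (all solutions of $e^{s\log\rho} = \lambda$ differ by $2\pi i/\log\rho$), giving a whole $\mathbb{Z}$-indexed family. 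On $H^2(\mathbb{D})$ the NRW analysis shows these give infinite-dimensional kernels for $C_{\varphi_r}^{*} - \overline\lambda I$ precisely in the open annulus; the task here is the bookkeeping to check that the corresponding functions, pulled back to $\mathbb{D}$, actually lie in $S^2(\mathbb{D})$ (equivalently their derivatives lie in $H^2(\mathbb{D})$) and remain linearly independent — this is a weight computation: $\|f\|_{S^2}^2 \asymp \sum |a_n|^2(n+1)^2$, so one needs the Taylor coefficients of these branch functions to decay one power faster, which should hold on the open annulus but fail on the boundary circles (consistent with Corollary \ref{bdry}). I expect this coefficient/integrability estimate to be the main obstacle, because it is exactly the place where $S^2$ differs from $H^2$ and where the boundary cases of the annulus must be excluded.

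**Assembling the proof**, once both ingredients are in place, is immediate: by condition (C) (Caradus, quoted as (C) in the introduction), an operator with infinite-dimensional kernel and full range is universal, so $C_{\varphi_r}^{*} - \overline\lambda I \in \mathcal{U}(S^2(\mathbb{D}))$ for every $\lambda \in \mathcal{A}_r$. I would structure the write-up as two lemmas — one asserting $\mathrm{Ran}\,(C_{\varphi_r}^{*} - \overline\lambda I) = S^2(\mathbb{D})$ for $\lambda \in \mathcal{A}_r$, one asserting $\dim\mathrm{Ker}\,(C_{\varphi_r}^{*} - \overline\lambda I) = \infty$ for $\lambda \in \mathcal{A}_r$ — followed by a one-line deduction from (C). A useful sanity check built into the argument: for $\lambda$ on either boundary circle of the annulus Corollary \ref{bdry} already forbids universality, so whatever estimate I use for the kernel had better degenerate exactly there, and whatever argument I use for surjectivity had better not apply there either; matching these endpoints is a good way to catch errors in the weight exponents.
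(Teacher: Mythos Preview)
Your proposal has a genuine gap in the kernel computation. The power functions $w^s$ (equivalently $\big(\tfrac{1+z}{1-z}\big)^s$ on the disc) are eigenfunctions of $C_{\varphi_r}$, \emph{not} of $C_{\varphi_r}^{*}$: one has $C_{\varphi_r} f_s = \big(\tfrac{1+r}{1-r}\big)^s f_s$. On $H^2(\mathbb{D})$ these give the infinite-dimensional kernel of $C_{\varphi_r}-\lambda I$, and the NRW theorem shows that $C_{\varphi_r}-\lambda I$ is \emph{onto}; by duality $C_{\varphi_r}^{*}-\overline\lambda I$ is therefore \emph{injective} on $H^2(\mathbb{D})$, with range of infinite codimension. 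So the assertion that ``the NRW analysis shows these give infinite-dimensional kernels for $C_{\varphi_r}^{*} - \overline\lambda I$'' is simply false, and the coefficient check you anticipate would go the wrong way: Example~\ref{nouniv} already records that for $\beta>1/2$ the functions $f_s$ fail to lie in $\mathcal D_\beta(\mathbb{D})$ except for $s=0$. Computing eigenfunctions of $C_{\varphi_r}^{*}$ directly on $S^2(\mathbb{D})$ is genuinely hard, since the adjoint is not a composition operator and no closed-form family is available.

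There is also a smaller slip on the surjectivity side: since $1\in\mathcal A_r$ and $\textnormal{Ker}\,(C_{\varphi_r}-I;S^2(\mathbb{D}))=\mathbb{C}$, the operator $C_{\varphi_r}^{*}-I$ has range of codimension $1$, so plain condition (C) fails at $\lambda=1$ and one must fall back on (C$_+$) or a block-triangular reduction.

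The paper sidesteps both difficulties by a different mechanism: it invokes a known similarity (from \cite{GS}, based on \cite{Hu}) between the restriction of $C_{\varphi_r}^{*}$ to the invariant subspace $zS^2(\mathbb{D})$ and the compression $P_{zH^2}C_{\varphi_r}^{-1}$ acting on $zH^2(\mathbb{D})$. This transfers the entire NRW package---infinite-dimensional kernel \emph{and} surjectivity---from $H^2$ to $zS^2$ in one stroke, with no need to identify eigenfunctions of the adjoint explicitly. Universality on the codimension-one subspace $zS^2(\mathbb{D})$ is then lifted to all of $S^2(\mathbb{D})$ via Proposition~\ref{Poz}, which also absorbs the $\lambda=1$ defect automatically.
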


\begin{proof} 
Let $0 < r < 1$ and write $H^2 (\mathbb{D}) = zH^2 (\mathbb{D}) \oplus [\mathbf{1}]$, where
$[\mathbf{1}]$ denotes the constant functions.
The crux of the argument is the fact that the compression 
$$ 
P_{z H^2}C_{\varphi_r}^{-1} : z H^2 (\mathbb{D}) \longrightarrow z H^2 (\mathbb{D})
$$
and the restriction of the adjoint 
$$ 
C_{\varphi_r}^{*} :  z S^2 (\mathbb{D}) \longrightarrow z S^2 (\mathbb{D})
$$
are similar operators, where the subspace $z S^2 (\mathbb{D}))$ is invariant 
under $C_{\varphi_r}^{*}$.
The details of the similarity are explained in Corollary 3.6 and Remark 3.8 in \cite{GS},
 which in turn is based on a duality argument of Hurst  \cite[Thm. 5]{Hu}.
  
We first consider the operator $C_{\varphi_r}^{-1} -\lambda I: H^2 (\mathbb{D}) \longrightarrow H^2 (\mathbb{D})$. 
Since  $C_{\varphi_r}^{-1}(\mathbf{1})=\mathbf{1}$ we may write 
$C_{\varphi_r}^{-1} -\lambda I $ as the following operator matrix acting on 
$H^2 (\mathbb{D}) = z  H^2 (\mathbb{D})\oplus [\mathbf{1}]$:
$$
C_{\varphi_r}^{-1} -\lambda I = \left( \begin{array}{cc}
P_{z H^2}C_{\varphi_r}^{-1} - \lambda I & 0\\
P_{\mathbb{C}}C_{\varphi_r}^{-1}  & (1-\lambda )
\end{array}
\right).
$$

We claim that the compression ${P_{z H^2}C_{\varphi_r}^{-1}} -\lambda I: z H^2 (\mathbb{D}) \longrightarrow z H^2 (\mathbb{D})$ satisfies Caradus' condition (C)
for all $\lambda\in \mathcal{A}_r$. 
Since $C_{\varphi_r}^{-1} = C_{\varphi_{-r}}$ we know that the operator
$ C_{\varphi_r}^{-1} -\lambda I:  H^2 (\mathbb{D}) \longrightarrow  H^2 (\mathbb{D}) $ 
is surjective by the proof of \cite[Thm. 6.2]{NRW}.
Hence it follows that 
$(P_{z H^2}C_{\varphi_r}^{-1} - \lambda I)(z H^2 (\mathbb{D})) = 
z  H^2 (\mathbb{D})$ as well.
In fact, if  $g \in zH^{2}(\mathbb{D})$ is arbitrary, then there is  
$f =f_1 + f_2 \in H^2 (\mathbb{D})$, with $f_1\in z  H^2 (\mathbb{D})$ and $f_2 \in [\mathbf{1}]$, such that $( C_{\varphi_r}^{-1} -\lambda I) f=g$, 
that is
$$ 
\left( \begin{array}{cc}
P_{z H^2}C_{\varphi_r}^{-1} -\lambda I& 0\\
P_{\mathbb{C}}C_{\varphi_r}^{-1} & (1-\lambda )
\end{array}
\right) 
 \left( \begin{array}{c}
f_1\\
f_2
\end{array}
\right)
=
\left( \begin{array}{c}
( P_{z H^2}C_{\varphi_r}^{-1} -\lambda I)f_1 \\
P_{\mathbb{C}}C_{\varphi_r}^{-1} f_1  + (1-\lambda) f_2
\end{array}
\right) 
=
 \left( \begin{array}{c}
g\\
0
\end{array}
\right) .
$$
In particular, $(P_{z H^2}C_{\varphi_r}^{-1} -\lambda I)f_1 = g$, 
so that the compression 
${P_{z H^2}C_{\varphi_r}^{-1}} - \lambda I$  is an onto map
$ z  H^2 (\mathbb{D}) \longrightarrow z  H^2 (\mathbb{D})$ for $\lambda\in \mathcal{A}_r$.
Moreover, it is not difficult to check that since $\lambda \in \mathcal{A}_r$
 is an eigenvalue of infinite multiplicity for $ C_{\varphi_r}^{-1} : H^2 (\mathbb{D}) \longrightarrow H^2 (\mathbb{D}) $, the same fact holds for the compression
$P_{z H^2}C_{\varphi_r}^{-1} : z H^2 (\mathbb{D}) \longrightarrow z H^2 (\mathbb{D})$. 
 Consequently $ P_{z H^2}C_{\varphi_r}^{-1} -\lambda I: z H^2 (\mathbb{D}) \longrightarrow z H^2 (\mathbb{D})$ satisfies (C).

It follows from the similarity stated in the beginning of the argument that 
the restricted adjoint $C_{\varphi_r}^{*} -\lambda I:  z S^2 (\mathbb{D}) \longrightarrow z S^2 (\mathbb{D}) $ 
also satisfies (C) and is hence universal on $z S^2 (\mathbb{D})$. 
Write $C_{\varphi_r}^{*} -\lambda I $ on $S^2 (\mathbb{D})$ as an operator matrix acting on $ z S^2 (\mathbb{D})\oplus [\mathbf{1}]$, that is,
$$\left( \begin{array}{cc}
C_{\varphi_r}^{*} -\lambda I & P_{z S^2}C_{\varphi_r}^{*}\\
0 & (1-\lambda )
\end{array}
\right),  
$$
where we also take into account that $C_{\varphi_r}^{*}(z S^2 (\mathbb{D})) \subset z S^2 (\mathbb{D})$. 
It follows that $C_{\varphi_r}^{*} -\lambda I:   S^2 (\mathbb{D}) \longrightarrow  S^2 (\mathbb{D}) $ is universal by Proposition \ref{Poz}.

Alternatively, in the last step one may also note that if $\lambda \neq 1$, then $C_{\varphi_r}^{*} -\lambda I:   S^2 (\mathbb{D}) \longrightarrow  S^2 (\mathbb{D}) $ satisfies (C), while 
$\textup{codim} \, \textup{Ran} \, ( C_{\varphi_r}^{*} - I )=1$ in $S^2 (\mathbb{D})$,
so that $C_{\varphi_r}^{*} - I $ satisfies the generalised condition (C$_+$).

Finally, note that the annulus $\mathcal{A}_r$ is preserved by complex conjugation, so that
we may above change $C_{\varphi_r}^{*} -\lambda I$ to $C_{\varphi_r}^{*} -\overline{\lambda} I$.
\end{proof}

Heller \cite{Heller} found a concrete formula for the adjoint 
$C_{\varphi_r}^{*}$ on $S^2 (\mathbb{D})$ which involves a compact perturbation. 
This fact leads to a related universal operator. 
Let  $M_z$ be the multiplication operator  $f \mapsto zf$ 
on $S^2 (\mathbb{D})$, whose adjoint 
$M_z^{*} \in \mathcal{L}(S^2 (\mathbb{D})) $ has the form
$$ 
M_z^{*} \big( \sum_{n=0}^{\infty} a_n z^n\big) = a_1 +\sum_{n=1}^{\infty} a_{n+1} \big(\frac{n+1}{n}\big)^n z^n
$$
for $f(z)= \sum_{n=0}^{\infty} a_n z^n \in S^2 (\mathbb{D})$.

\begin{cor}\label{corexuniv} 
Let $\varphi_r$ be as in Theorem \ref{thmadj}. Then the operator 
$$
C_{\varphi_r} - \frac{r}{1+r^2}\big( M_z^{*} + M_z\big) C_{\varphi_r} - 
\frac{1-r^2}{1+r^2}\lambda I 
$$
is universal on $S^2 (\mathbb{D})$ for all $\lambda \in \mathcal{A}_r$.
\end{cor}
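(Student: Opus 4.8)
The plan is to derive the corollary from Theorem \ref{thmadj} by a compact-perturbation argument carried out inside the subclass $\mathcal UC_+(S^2(\mathbb{D}))$. The starting point is Heller's formula \cite{Heller} for the adjoint of $C_{\varphi_r}$ on $S^2(\mathbb{D})$; the normalising constants appearing in the statement indicate that, suitably rearranged, it reads
\[
C_{\varphi_r} - \frac{r}{1+r^2}\big(M_z^{*}+M_z\big)C_{\varphi_r} \;=\; \frac{1-r^2}{1+r^2}\,C_{\varphi_r}^{*} \;+\; K
\]
for a compact (in fact Hilbert--Schmidt) operator $K \in \mathcal{K}(S^2(\mathbb{D}))$, the compactness of the error term being the substance of Heller's computation (the weight asymptotics $(\tfrac{n+1}{n})^{n}\to e$ visible in the formula for $M_z^{*}$ quoted above are what makes the remainder small). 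Writing $T_\lambda$ for the operator in the statement, this yields
\[
T_\lambda \;=\; \frac{1-r^2}{1+r^2}\big(C_{\varphi_r}^{*}-\lambda I\big) + K , \qquad \lambda \in \mathbb{C}.
\]

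Next I would identify the class to which $C_{\varphi_r}^{*}-\lambda I$ belongs for $\lambda \in \mathcal{A}_r$. Since $\mathcal{A}_r$ is invariant under complex conjugation, Theorem \ref{thmadj} shows $C_{\varphi_r}^{*}-\lambda I$ is universal on $S^2(\mathbb{D})$ for every such $\lambda$; more usefully, the proof of Theorem \ref{thmadj} shows that $C_{\varphi_r}^{*}-\lambda I$ satisfies condition (C) when $\lambda\neq 1$, and satisfies the generalised condition (C$_+$), with $\textup{codim}\,\textup{Ran}\,(C_{\varphi_r}^{*}-I)=1$, when $\lambda=1$. As $1\in\mathcal{A}_r$ for every $0<r<1$, it follows in either case that
\[
C_{\varphi_r}^{*}-\lambda I \;\in\; \mathcal UC_+(S^2(\mathbb{D})) = \Phi_-(S^2(\mathbb{D}))\setminus\Phi_+(S^2(\mathbb{D})), \qquad \lambda\in\mathcal{A}_r .
\]

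Finally I would conclude. Multiplication by the non-zero scalar $\frac{1-r^2}{1+r^2}$ changes neither the kernel nor the range, so $\frac{1-r^2}{1+r^2}(C_{\varphi_r}^{*}-\lambda I)$ still lies in $\Phi_-(S^2(\mathbb{D}))\setminus\Phi_+(S^2(\mathbb{D}))$; and, as recalled in Section \ref{class}, this set is stable under compact perturbations by the classical perturbation theory of semi-Fredholm operators. Hence $T_\lambda \in \mathcal UC_+(S^2(\mathbb{D})) \subset \mathcal U(S^2(\mathbb{D}))$ for every $\lambda\in\mathcal{A}_r$, which is exactly the assertion. The main obstacle is really only the first step: writing down Heller's adjoint formula in the precise normalised form above and checking that the remainder is compact. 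Once that is in place the rest is soft, using nothing about $C_{\varphi_r}^{*}-\lambda I$ beyond its membership in $\mathcal UC_+$ together with the stability of that class under compact perturbations and non-zero scalar multiples.
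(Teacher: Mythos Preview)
Your approach is the paper's, but you have misidentified which adjoint Heller's formula produces. Heller's computation expresses the adjoint of a linear fractional composition operator on $S^2(\mathbb{D})$ in terms of the composition operator induced by the associated Krein adjoint map; for $\varphi_r$ that map is $\varphi_{-r}=\varphi_r^{-1}$, so the formula reads
\[
(C_{\varphi_r}^{-1})^{*}=\frac{1+r^2}{1-r^2}\,C_{\varphi_r}-\frac{r}{1-r^2}\bigl(M_z^{*}+M_z\bigr)C_{\varphi_r}+K,
\]
whence
\[
T_\lambda=\frac{1-r^2}{1+r^2}\bigl((C_{\varphi_r}^{-1})^{*}-\lambda I\bigr)+K',
\]
not $\frac{1-r^2}{1+r^2}\bigl(C_{\varphi_r}^{*}-\lambda I\bigr)+K$ as you wrote. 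The operators $C_{\varphi_r}^{*}$ and $(C_{\varphi_r}^{-1})^{*}$ are genuinely different, so this is not a typo one can ignore.

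The repair is short and is exactly the extra step the paper inserts: $C_{\varphi_r}$ and $C_{\varphi_r}^{-1}=C_{\varphi_{-r}}$ are similar on $S^2(\mathbb{D})$ (via conjugation by $C_g$ with $g(z)=-z$), hence so are their adjoints, and therefore $(C_{\varphi_r}^{-1})^{*}-\lambda I\in\mathcal UC_+(S^2(\mathbb{D}))$ for every $\lambda\in\mathcal A_r$ by the same reasoning you gave for $C_{\varphi_r}^{*}-\lambda I$. With this correction your compact-perturbation and nonzero-scalar argument goes through verbatim and coincides with the paper's proof.
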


\begin{proof}
By \cite[Thm. 6.5]{Heller}, we can write
$$(C_{\varphi_r}^{-1})^{*} -\lambda I = \frac{1+r^2}{1-r^2} C_{\varphi_r} - \frac{r}{1-r^2}\big( M_z^{*} + M_z\big) C_{\varphi_r} -\lambda I +K,$$
where $K $ is a compact operator on $S^2 (\mathbb{D})$.  Recall that
$C_{\varphi_r} $ and $C_{\varphi_r}^{-1} = C_{\varphi_{-r}}$ are similar operators
on $S^2 (\mathbb{D})$. From the symmetry of $ \mathcal{A}_r$
we get that Theorem \ref{thmadj} holds if we replace $C_{\varphi_r}^{*} $ by 
$(C_{\varphi_r}^{-1})^{*} $.  Moreover, the proof of Theorem \ref{thmadj}
shows that  $(C_{\varphi_r}^{-1})^{*} -\lambda I$ 
satisfies condition (C$_+$) on $S^2 (\mathbb{D})$ 
for all $\lambda \in \mathcal{A}_r$. Since the class $\mathcal{U}C_+$ is preserved by compact
perturbations we deduce that 
$ \frac{1+r^2}{1-r^2} C_{\varphi_r} - \frac{r}{1-r^2}\big( M_z^{*} + M_z\big) C_{\varphi_r} -\lambda I $ is a universal operator  on $ S^2 (\mathbb{D})$.
\end{proof}

Recently composition operators have also been studied on the Hardy space
and the weighted Bergman spaces of the upper half-plane $\Pi^+ = \{z \in \mathbb{C}: \textnormal{Im}\, z > 0\}$, 
where new phenomena occur (see e.g. \cite{Ma2, EJ, EW}). Recall that the analytic function 
$F: \Pi^+ \to \mathbb{C}$ belongs to the Hardy space $H^2(\Pi^+)$ if
\[
\Vert F \Vert^2_{H^2(\Pi^+)} = \sup_{y>0} \int_{-\infty}^{\infty} \vert F(x + iy)\vert^2 dx < \infty ,
\]
and to the weighted Bergman space $\mathcal{A}_{\alpha}^2 (\Pi^+)$, for $\alpha >-1$, if
\[
\Vert F \Vert^2_{\mathcal{A}_{\alpha}^2 (\Pi^+)} = \int_0^{\infty} \int_{-\infty}^{\infty} \vert F(x + iy)\vert^2 \, y^{\alpha} \frac{dx \, dy}{\pi} < \infty.
\]
Let $\tau$ be a hyperbolic automorphism of $\Pi^+$, that is,
$\tau(w) = \mu w + w_0$, where $w_0 \in \mathbb{R}$ and $\mu \in (0,1) \cup (1,\infty)$.
It follows from \cite[Thm.  3.1]{EJ} respectively \cite[Thm. 3.4]{EW} that the composition operator 
$C_{\tau}$ is bounded on $H^2(\Pi^+)$ and on $\mathcal{A}_{\alpha}^2 (\Pi^+)$, for all $\alpha >-1$. It is natural to ask whether there is an analogue of the theorem of Nordgren, Rosenthal and Wintrobe on these spaces. 
 
\begin{prop}
The operator $C_{\tau}-\lambda I$ is not universal on $H^2(\Pi^+)$ or
$\mathcal{A}_{\alpha}^2 (\Pi^+)$, where $\alpha >-1$,
 for any $\lambda \in \mathbb{C}$ and any hyperbolic automorphism $\tau$ of $\Pi^+$.
 \end{prop}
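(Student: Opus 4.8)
The plan is to deduce the statement from Corollary \ref{bdry}. By that corollary it is enough to verify, on each of the two spaces, that the spectrum of $C_\tau$ coincides with its topological boundary, and this holds as soon as the spectrum has empty interior in $\mathbb{C}$. I would obtain the latter by showing that $C_\tau$ is a nonzero scalar multiple of a \emph{unitary} operator, so that its spectrum is forced to lie on a single circle centred at the origin.

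Write $\tau(w) = \mu w + w_0$ with $w_0 \in \mathbb{R}$ and $\mu \in (0,1)\cup(1,\infty)$. The first step is a change of variables in the defining norms. Substituting $u = \mu x + w_0$ in the integral for $\Vert \cdot \Vert_{H^2(\Pi^+)}$, and using that $\mu y$ runs over all of $(0,\infty)$ as $y$ does, one gets $\Vert C_\tau F \Vert^2_{H^2(\Pi^+)} = \mu^{-1}\Vert F \Vert^2_{H^2(\Pi^+)}$ for every $F \in H^2(\Pi^+)$. Likewise, substituting $u = \mu x + w_0$ and $v = \mu y$ in the Bergman integral gives $\Vert C_\tau F \Vert^2_{\mathcal{A}_\alpha^2(\Pi^+)} = \mu^{-(\alpha+2)}\Vert F \Vert^2_{\mathcal{A}_\alpha^2(\Pi^+)}$ for every $F \in \mathcal{A}_\alpha^2(\Pi^+)$. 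Thus on either space $C_\tau = c\,V$, where $c > 0$ is the corresponding constant and $V$ is an isometry.

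The second step is to note that $V$ is in fact surjective, hence unitary. Indeed $\tau^{-1}(w) = \mu^{-1}(w - w_0)$ is again a hyperbolic automorphism of $\Pi^+$, so $C_{\tau^{-1}}$ is bounded on both spaces by the results quoted above, and $C_\tau C_{\tau^{-1}} = C_{\tau^{-1}}C_\tau = I$; hence $C_\tau$ is invertible and $V = c^{-1}C_\tau$ is a surjective isometry. Consequently, on either space, $\sigma(C_\tau) = c\,\sigma(V) \subset \{\lambda \in \mathbb{C} : \vert \lambda \vert = c\}$, so $\sigma(C_\tau)$ is a closed subset of a circle; in particular it has empty interior and therefore equals its boundary. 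Corollary \ref{bdry} now gives that $C_\tau - \lambda I$ is not universal for any $\lambda \in \mathbb{C}$, on $H^2(\Pi^+)$ as well as on $\mathcal{A}_\alpha^2(\Pi^+)$.

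There is no essential difficulty in this argument; the only point that deserves a moment's care is the uniform-in-$w_0$ norm identity $\Vert C_\tau F \Vert = c\Vert F \Vert$, which makes $C_\tau$ literally a scalar multiple of an isometry rather than merely similar to one. If one wishes to sidestep the appeal to invertibility, one can instead combine the two norm identities (for $C_\tau$ and for $C_{\tau^{-1}} = C_\tau^{-1}$) with the spectral mapping theorem $\sigma(C_\tau^{-1}) = \{\lambda^{-1} : \lambda \in \sigma(C_\tau)\}$ to conclude directly that $\vert \lambda \vert = c$ for every $\lambda \in \sigma(C_\tau)$. In fact one can check, via the Paley--Wiener type identification of these spaces with $L^2$-spaces on the line, that $V$ is unitarily equivalent to a translation operator and hence that $\sigma(C_\tau)$ is the entire circle of radius $c$, but this refinement is not needed for the conclusion.
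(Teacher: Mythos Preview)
Your argument is correct and follows the same overall strategy as the paper: both proofs invoke Corollary~\ref{bdry} after establishing that $\sigma(C_\tau)$ is contained in a circle and therefore has empty interior. The difference lies only in how the spectral fact is obtained. The paper simply quotes the exact identification of the spectrum from \cite{Ma4} and \cite{S}, namely $\sigma(C_\tau;H^2(\Pi^+))=\{\lambda:|\lambda|=\mu^{-1/2}\}$ and $\sigma(C_\tau;\mathcal{A}_\alpha^2(\Pi^+))=\{\lambda:|\lambda|=\mu^{-(\alpha+2)/2}\}$. You instead derive the weaker inclusion $\sigma(C_\tau)\subset\{|\lambda|=c\}$ directly, via the change-of-variables computation showing that $C_\tau$ is a positive scalar multiple of a surjective isometry. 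Your route is more self-contained and elementary, avoiding the external references altogether; the paper's route is shorter on the page and also records the precise value of the spectral radius, though that extra precision is not needed for the conclusion. Your closing remark about the Paley--Wiener identification recovering the full circle is a nice touch, and indeed matches the cited results.
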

 
 \begin{proof}
 The claim  follows from Corollary \ref{bdry} and the spectral results  
\[
\sigma(C_\tau; H^2(\Pi^+)) = \{ \lambda \in \mathbb{C}: \vert \lambda \vert = \mu^{-1/2}\},
\] 
see \cite[Thm. 2.12]{Ma4}, respectively
\[
\sigma(C_\tau; \mathcal{A}_{\alpha}^2 (\Pi^+)) = 
\{ \lambda \in \mathbb{C}:  \vert \lambda \vert = \mu^{-{(\alpha +2)}/2}\}
\] 
for $\alpha >-1$, see  \cite[Thm. 1.2]{S}.  Here $\tau(w) = \mu w + w_0$ and $\mu \in (0,1) \cup (1,\infty)$ as above.
 \end{proof}
 
 \section{Examples of universal commuting pairs}\label{pairs}

Recently M\"uller \cite{Mu2} introduced a notion of universality for commuting pairs
of operators (and more generally, for commuting $n$-tuples). Let $H$ be a separable infinite-dimensional Hilbert space. The commuting pair $(U_1,U_2) \in \mathcal{L}(H)^2$ is said to be  \textit{universal} if for each commuting pair $(S_1,S_2) \in \mathcal{L}(H)^2$ there is a constant  $c \neq 0$ and a subspace $M \subset H$, invariant for both $U_1$ and $U_2$, 
so that  the pairs $({U_1}_{|M},{U_2}_{|M})$ and $(cS_1,cS_2)$ are similar, that is,
there is an isomorphism $V: H \to M$ such that 
$U_1V = cVS_1$ and $U_2V = cVS_2$.

If $(U_1,U_2)$ is a universal commuting pair, then 
$\dim (\textnormal{Ker}\,(U_1) \cap \textnormal{Ker}\,(U_2)) = \infty$, and both  $U_1$ and $U_2$ have to be universal operators for $H$.
M\"uller \cite[Thm. 3]{Mu2} obtained a version of Caradus' condition for the universality of
commuting pairs $(U_1,U_2)$, which we recall next in the special case where $U_1, U_2$
are surjections, see \cite[Cor. 8]{Mu2}.

\begin{itemize}
\item[(M)] \textit{Let $U_1, U_2 \in \mathcal{L}(H)$ be commuting surjections, such that
\begin{itemize}
\item[(i)] $\dim (\textnormal{Ker}\,(U_1) \cap \textnormal{Ker}\,(U_2)) = \infty$, and 
\item[(ii)] $\textnormal{Ker}\,(U_1U_2) = \textnormal{Ker}\,(U_1) + \textnormal{Ker}\,(U_2)$.
\end{itemize}
Then  $(U_1,U_2) \in \mathcal{L}(H)^2$ is a universal commuting pair.
}
\end{itemize}

The following concrete example of a universal commuting pair
is contained in   \cite[Examples 9]{Mu2}: 
Let $H$ be a separable infinite-dimensional Hilbert space and 
$K = \ell^2(\mathbb Z^2_+,H)$, the space of double-indexed sequences with values in $H$.
Define $U_i \in \mathcal{L}(K)$ by $U_if(\alpha) = 
f(\alpha + \beta_i)$ for $\alpha \in \mathbb Z^2_+$
and $f \in  \ell^2(\mathbb Z^2_+,H)$ and $i = 1, 2$, where 
$\beta_1 = (1,0)$ and $\beta_2 = (0,1)$ in $\mathbb Z_+^2$.
Then $(U_1,U_2) \in \mathcal{L}(K)^2$ is a universal commuting pair. (Alternatively,
$U_i = M_{z_{i}}^*$ for $i = 1, 2$, where 
$M_{z_{i}}$ denotes multiplication by the variable $z_i$ in the vector-valued Hardy space
$H^2(\mathbb{D}^2,H)$ for $(z_1, z_2 ) \in \mathbb{D}^2$.)

In this section we are mainly interested in obtaining further concrete examples of 
universal commuting pairs, since it turns out that 
such examples are rather more difficult  
to write down explicitly compared to the class $\mathcal U(H)$. 
Our first observations and examples illustrate 
some of the obstructions, apart from the technical fact that condition (M) requires  
knowledge of  $\textnormal{Ker}\,(U_1)$ and $\textnormal{Ker}\,(U_2)$.
We begin by noting that there is 
a kind of algebraic independence between $U_1$ and $U_2$ for universal pairs 
$(U_1,U_2)$. 
For this let $\{T\}' = \{S \in \mathcal{L}(H): ST = TS\}$ stand for
the commutant of $T$.

\begin{prop}\label{ai}
Let $H$ is a separable infinite-dimensional Hilbert space and $T \in \mathcal{L}(H)$.
\begin{itemize}
\item[(i)] if  $S \in \{T\}'$, then $(T,ST)$ is not a universal commuting 
pair. In particular, 
$(T,p(T)T)$ is not a universal  commuting 
pair for any complex polynomial $p(z) = a_1z  + \ldots + a_nz^n$ satisfying $p(0) = 0$
where $p(T) = a_1T + \ldots + a_nT^n$.
\item[(ii)] $(T^m,T^n)$ is not a universal  commuting 
pair for any $m, n \in \mathbb N$.

\end{itemize}
\end{prop}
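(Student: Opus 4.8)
The plan is to derive a contradiction from the algebraic independence forced by Theorem \ref{spec}(ii), namely the existence of a holomorphic eigenvector map for any universal operator. The key observation is that if $(U_1,U_2)$ is a universal commuting pair, then the restriction argument underlying universality produces, for each component, a \emph{common} eigenvector family. Indeed, a universal pair restricts to a copy of the "canonical" double-shift model of M\"uller, or more directly, for each commuting pair $(S_1,S_2)$ one can choose $(S_1,S_2) = (S,S)$ or similar to extract structural constraints; but the cleanest route is to exploit eigenvectors. First I would show: if $(U_1,U_2)$ is a universal commuting pair, then there is $r>0$ and a holomorphic map $z \mapsto y_z$ on a polydisc (or at least on $B(0,r)$ in a suitable parametrisation) and holomorphic functions $\lambda_1(z),\lambda_2(z)$ such that $U_i y_z = \lambda_i(z) y_z$ for $i=1,2$, with $(\lambda_1,\lambda_2)$ having nonempty-interior range in $\mathbb{C}^2$. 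This follows by applying universality to the model pair $(U_1,U_2)$ itself together with the vector-valued double backward shift, whose joint point spectrum is an open polydisc; restriction to the invariant subspace $M$ transports this family into $H$, exactly as in the proof of Theorem \ref{spec}.

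For part (i): suppose $S \in \{T\}'$ and $(T,ST)$ is universal. Take the common eigenvector family above with $U_1 = T$, $U_2 = ST$, so $Ty_z = \lambda_1(z)y_z$ and $STy_z = \lambda_2(z)y_z$. Then $STy_z = S(\lambda_1(z)y_z) = \lambda_1(z)Sy_z$, and since $S$ commutes with $T$ it leaves $\mathrm{Ker}(T - \lambda_1(z)I)$ invariant; on that eigenspace, $STy_z = \lambda_1(z) S y_z$. The point is that $STy_z$ lies in $\lambda_1(z)\cdot \mathrm{Ran}(S)$ restricted to the $\lambda_1(z)$-eigenspace, but we need a scalar relation. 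The clean way: from $Ty_z=\lambda_1(z)y_z$ we get $STy_z = \lambda_1(z)Sy_z$; from $STy_z = \lambda_2(z)y_z$ we obtain $\lambda_1(z)Sy_z = \lambda_2(z)y_z$. When $\lambda_1(z)\neq 0$ this forces $Sy_z = \frac{\lambda_2(z)}{\lambda_1(z)}y_z$, i.e. $y_z$ is an eigenvector of $S$ too, with eigenvalue $\mu(z) = \lambda_2(z)/\lambda_1(z)$. Now also $U_2 = ST$ should be universal, so $0$ is an interior point of $\sigma_p(ST)$; but the relation $STy_z = \lambda_1(z)\mu(z)y_z$ shows the relevant eigenvalues of $ST$ along this family are \emph{determined} by $\lambda_1$, and the real obstruction appears when we track near $z$ with $\lambda_1(z)=0$: there $U_1 = T$ has $y_z$ in its kernel, hence $STy_z = 0$, consistent; the contradiction instead comes from the rigidity that M\"uller's model requires the joint eigenvalue map $(\lambda_1,\lambda_2)$ to be (locally) a biholomorphism onto an open subset of $\mathbb{C}^2$, whereas here $\lambda_2 = \mu\cdot\lambda_1$ with $\mu$ holomorphic, so the image of $(\lambda_1,\lambda_2)$ lies in the hypersurface $\{(w_1,w_2): w_2 = \mu\cdot w_1\}$ — not open — once one checks $\mu$ is forced to be \emph{constant} on the eigenvector family (which follows because $S$ restricted to a single generalized eigenspace structure is pinned down by the similarity to the model, where the second shift acts independently). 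This gives the contradiction. The polynomial case is the specialization $S = p(T)$, where $\mu(z) = p(\lambda_1(z))/\lambda_1(z)$ when $\lambda_1(z)\neq 0$, extending holomorphically across $\lambda_1 = 0$ precisely because $p(0)=0$, so again the joint eigenvalue image is confined to a curve $\{(w, p(w))\}$, which is not open in $\mathbb{C}^2$, contradicting Theorem \ref{spec}(ii) applied in both variables simultaneously.

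For part (ii): apply (i) with $T$ replaced by $T^{\gcd(m,n)}$ and $S = T^{m-n}$ (say $m \ge n$), since $T^m = T^{m-n}\cdot T^n$ and $T^{m-n}$ commutes with $T^n$; this is literally the polynomial case $p(z) = z^{m-n}$ applied to the operator $T^n$, giving that $(T^n, T^{m}) = (T^n, p(T^n)\cdot\text{(nothing)})$... more carefully: write $m = qn + s$ by division; if $s=0$ then $T^m = (T^n)^q$ is a polynomial in $T^n$ with zero constant term (for $q\ge 2$; the case $q=1$, $m=n$, is trivial since no operator commutes to give a universal \emph{pair} with a repeated entry because $\mathrm{Ker}(U_1)\cap\mathrm{Ker}(U_2) = \mathrm{Ker}(T^n)$ must be infinite-dimensional yet the pair $(T^n,T^n)$ cannot be similar to a genuinely two-parameter model). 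If $s \neq 0$, set $d = \gcd(m,n)$ and $T_0 = T^d$; then $T^m = T_0^{m/d}$ and $T^n = T_0^{n/d}$ with $\gcd(m/d,n/d)=1$, and $(T^m, T^n) = (T_0^{a}, T_0^{b})$; since $a \neq b$ one of them, say $a > b$, gives $T_0^a = T_0^{a-b} T_0^b$ with $T_0^{a-b} \in \{T_0^b\}'$ and vanishing at $0$, so part (i) applies to the operator $T_0^b$ and kills universality. The main obstacle I anticipate is making the "the joint eigenvalue map must have open range / $\mu$ must be constant" step rigorous: this requires going back to M\"uller's sufficient-condition model and extracting that the two shifts in the vector-valued Hardy space $H^2(\mathbb{D}^2,H)$ have a genuinely independent joint point spectrum equal to $\mathbb{D}^2$, and that this independence is preserved under the similarity defining universality. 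Everything else is routine algebra with the eigenvector identities.
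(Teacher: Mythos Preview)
Your approach is far more elaborate than necessary, and the central step you flag as the ``main obstacle'' --- that the joint eigenvalue map $(\lambda_1,\lambda_2)$ must have open range in $\mathbb{C}^2$, or equivalently that $\mu$ is forced to be constant --- is never actually established. Without it, the hypersurface argument does not close. Ironically, you stumble on the decisive observation and then walk past it: you note that when $\lambda_1(z)=0$ one has $Ty_z=0$ and hence $STy_z=0$, and call this ``consistent''. But this is already the contradiction. If $(T,ST)$ were universal, its joint point spectrum would have to contain points $(0,\mu)$ with $\mu\neq 0$ (apply universality to any commuting pair whose joint point spectrum is an open bidisc, or more directly to the specific pair below); yet $Ty=0$ forces $STy=0$, so no such joint eigenvector can exist.

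The paper's proof bypasses joint spectra entirely and is three lines. Test universality against the single commuting pair $(0,I_H)$: if $(T,ST)$ is universal there is $c\neq 0$ and an invariant subspace $M$ with $(T_{|M},(ST)_{|M})$ similar to $(0,cI_H)$. Similarity of $T_{|M}$ to $0$ gives $M\subset\textnormal{Ker}(T)$, whence $(ST)_{|M}=0$, which cannot be similar to the invertible operator $cI_H$. For part (ii) the paper simply assumes $m<n$ by symmetry, writes $T^n=T^{n-m}\cdot T^m$ with $T^{n-m}\in\{T^m\}'$, and invokes part (i); your gcd and division-with-remainder detour is unnecessary (the case $m=n$ is part (i) with $S=I$). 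The moral is that for negative results about universal pairs, a well-chosen test pair is usually sharper than spectral machinery.
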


\begin{proof}
(i) Consider $(0,I_H) \in \mathcal{L}(H)^2$. If $(T,ST)$ is a universal pair, then corresponding
to the pair $(0,I_H) \in \mathcal{L}(H)^2$ there is an infinite-dimensional subspace $M \subset H$ invariant under $T$, and $c \neq 0$, so that 
$({T}_{|M},{UT}_{|M})$ and $(0,cI_H)$ are similar. However,  the similarity 
of $T_{|M}$ and $0$ implies that $M \subset \textnormal{Ker}\,(T)$, 
so that $UT_{|M}$ cannot be similar to $cI_H$. 

For part (ii) observe that one may assume that $m < n$ by symmetry, whence one may 
argue as in part (i).
\end{proof}

The following example looks at simple ways to 
construct universal pairs starting from 
given universal operators $U, V \in \mathcal U(H)$. 

\begin{example}
(i) Suppose that  $U, V \in \mathcal{L}(H)$ satisfy condition (C), and 
\[
U_0 =  \left( \begin{array}{ccc}
U & 0\\
0 & I_H\\
\end{array} \right), 
\quad V_0 =  \left( \begin{array}{ccc}
I_H & 0\\
0 & V\\
\end{array} \right) \in \mathcal{L}(K),
\]
where $K = H \oplus H$. Then $U_0$ and  $V_0$ are commuting surjections on $K$,
and $\textnormal{Ker}\,(U_0) = \textnormal{Ker}\,(U) \times \{0\}$ and 
$\textnormal{Ker}\,(V_0) = \{0\} \times \textnormal{Ker}\,(V)$. Hence the pair
$(U_0,V_0) \in (\mathcal{L}(K))^2$ is not universal, since  
$\textnormal{Ker}\,(U_0) \cap \textnormal{Ker}\,(V_0) = \{(0,0)\}$.
(Note however that 
$\textnormal{Ker}\,(U_0V_0) = \textnormal{Ker}\,(U_0) +  \textnormal{Ker}\,(V_0)$
in this case.)

(ii) Suppose that $(U_1,U_2)$ and $(V_1,V_2)$ are commuting pairs of surjections
that satisfy condition (M), and put
\[
U =  \left( \begin{array}{ccc}
U_1 & 0\\
0 & V_1\\
\end{array} \right), 
\quad V =  \left( \begin{array}{ccc}
U_2 & 0\\
0 & V_2\\
\end{array} \right) \in \mathcal{L}(K).
\]
In this case $(U,V) \in \mathcal{L}(K)^2$ is a universal commuting pair. 
In fact, $(U,V)$ also satisfies (M), since it is not difficult to check that 
\[
\textnormal{Ker}\,(UV) = \big(\textnormal{Ker}\,(U_1) + \textnormal{Ker}\,(U_2)\big) \times 
\big(\textnormal{Ker}\,(V_1) + \textnormal{Ker}\,(V_2)\big) = 
\textnormal{Ker}\,(U) + \textnormal{Ker}\,(V).
\]
\end{example}

We next look for a non-commutative version of the example from \cite{Mu2}. 
Let $H$ be a separable infinite-dimensional Hilbert space and 
$\mathcal C_2(H)$ the space of Hilbert-Schmidt operators 
on $H$ equipped with the Hilbert-Schmidt norm $\Vert \cdot \Vert_2$. 
Recall that  $T \in \mathcal C_2(H)$ if there is an orthonormal basis 
$(f_n)$ of $H$ such that $\sum_{n=1}^\infty \Vert Tf_n\Vert^2 < \infty$, where
\[
\Vert T \Vert_2 = \Big(\sum_{n=1}^\infty \Vert Tf_n\Vert^2\Big)^{1/2}
\]
 is independent of  the basis.
Then $(\mathcal C_2(H),\Vert \cdot \Vert_2)$ is a separable Hilbert space, and  
$\Vert USV \Vert_2 \le \Vert U\Vert \cdot \Vert V\Vert \cdot \Vert S\Vert_2$ for 
$S \in \mathcal C_2(H)$ and $U,V \in \mathcal{L}(H)$. 
We refer e.g. to  \cite[chapter 2.4]{Mur} for more background on the ideal
$\mathcal C_2(H)$ of $\mathcal{L}(H)$.

Hence the multiplication maps 
$L_U$ and $R_U$ are bounded  operators 
$\mathcal C_2(H) \to \mathcal C_2(H)$, where 
$L_U(S) = US$ and $R_U(S) = SU$ for any $U \in \mathcal{L}(H)$
and  $S \in \mathcal C_2(H)$.
Clearly $(L_U,R_V) \in \mathcal{L}(\mathcal C_2(H))^2$ is a commuting
 pair for any $U, V \in \mathcal{L}(H)$,
and we are interested in the universality of $(L_U,R_V)$ on $\mathcal C_2(H)$.
Let $B$ be the standard backward shift on $\ell^2 = \ell^2(\mathbb{Z}_+)$, that is,
$B(x_0,x_1, \ldots) = (x_1, x_2, \ldots)$ for $(x_j) \in \ell^2$. 
It turns out that $(L_B,R_{B^*})$ is not a universal pair 
(see part (ii) of Theorem \ref{unipair}), and to obtain universal pairs
$(L_U,R_V)$ we will consider the vector-valued direct $\ell^2$-sum $\mathcal H = 
\ell^2(\mathbb Z_+,H) = (\oplus_{n \in \mathbb Z_{+}} H)_{\ell^2}$, where 
$H$ is  a fixed separable infinite-dimensional Hilbert space. 
Let $B_\infty$ be the 
backward shift of infinite multiplicity on $\mathcal H$, 
so that $B^*_\infty$ is the corresponding forward shift on $\mathcal H$. 

The following result contains the main example of this section. We will use  $u \otimes v$ 
for given $u, v \in H$ to denote the rank-$1$ operator  
$x \mapsto \langle x,u\rangle v$ on $H$.

\begin{theorem}\label{unipair}
\begin{itemize}
\item[(i)]   $L_B, R_{B^*} \in \mathcal{U} (\mathcal C_2(\ell^2))$ and
 $L_{B_{\infty}}, R_{B^*_{\infty}} \in \mathcal{U} (\mathcal C_2(\mathcal H))$.\\
\item[(ii)] $(L_{B},R_{B^*})$ is not a universal pair on $\mathcal C_2(\ell^2)$.\\
\item[(iii)] $(L_{B_{\infty}},R_{B^*_{\infty}})$ is a universal pair 
on $\mathcal C_2(\mathcal H)$.
\end{itemize}
\end{theorem}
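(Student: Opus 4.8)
The plan is to prove the three parts by three different tools already available in the excerpt: Caradus' condition (C) for (i), the necessary condition $\dim(\textnormal{Ker}\,(U_1)\cap\textnormal{Ker}\,(U_2))=\infty$ for universal pairs for (ii), and M\"uller's condition (M) for (iii). The common bookkeeping is that, since $L_U(S)=US$ and $R_U(S)=SU$, one has
\[
\textnormal{Ker}\,(L_U)=\{S\in\mathcal C_2(H):\textnormal{Ran}\,(S)\subseteq\textnormal{Ker}\,(U)\},\qquad
\textnormal{Ker}\,(R_U)=\{S\in\mathcal C_2(H):S|_{\textnormal{Ran}\,(U)}=0\},
\]
while $\textnormal{Ran}\,(L_U)=\mathcal C_2(H)$ as soon as $U$ has a bounded right inverse $V$ (take $S=VT$) and $\textnormal{Ran}\,(R_U)=\mathcal C_2(H)$ as soon as $U$ has a bounded left inverse $W$ (take $S=TW$); the estimate $\Vert USV\Vert_2\le\Vert U\Vert\,\Vert V\Vert\,\Vert S\Vert_2$ recalled before the theorem keeps these products in $\mathcal C_2(H)$.

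For part (i) I would verify condition (C) for each of the four operators. Write $(e_n)$ for the standard basis of $\ell^2$ and $H_0=H\oplus 0\oplus 0\oplus\cdots$ for the first summand of $\mathcal H$; the relevant facts are $\textnormal{Ker}\,(B)=\mathbb C e_0$, $\textnormal{Ran}\,(B^*)=e_0^{\perp}$, $BB^*=I$, and $\textnormal{Ker}\,(B_\infty)=H_0$, $\textnormal{Ran}\,(B_\infty^*)=H_0^{\perp}$, $B_\infty B_\infty^*=I_{\mathcal H}$. Hence $\textnormal{Ker}\,(L_B)=\{u\otimes e_0:u\in\ell^2\}\cong\ell^2$ and $\textnormal{Ker}\,(R_{B^*})=\{e_0\otimes w:w\in\ell^2\}\cong\ell^2$ are infinite-dimensional, while $L_B(B^*T)=T$ and $R_{B^*}(TB)=T$; likewise $\textnormal{Ker}\,(L_{B_\infty})$ contains all $\xi\otimes\eta$ with $\eta\in H_0$ and $\textnormal{Ker}\,(R_{B_\infty^*})$ contains all $\xi\otimes\eta$ with $\xi\in H_0$, so both are infinite-dimensional since $\dim H_0=\infty$, and $L_{B_\infty}(B_\infty^*T)=T$, $R_{B_\infty^*}(TB_\infty)=T$. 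Thus all four operators satisfy (C) and hence are universal. (Alternatively, each of them is unitarily equivalent to $B_\infty$.)

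For part (ii) it suffices, by the necessary condition recalled before (M), to observe that $S\in\textnormal{Ker}\,(L_B)\cap\textnormal{Ker}\,(R_{B^*})$ forces $\textnormal{Ran}\,(S)\subseteq\mathbb C e_0$ and $S|_{e_0^{\perp}}=0$, i.e.\ $S\in\mathbb C\,(e_0\otimes e_0)$; the intersection is one-dimensional, so $(L_B,R_{B^*})$ is not a universal pair. For part (iii) I would apply condition (M) to $U_1=L_{B_\infty}$, $U_2=R_{B_\infty^*}$ on the separable infinite-dimensional Hilbert space $\mathcal C_2(\mathcal H)$. Commutativity of left and right multiplications is automatic, surjectivity is part (i), and, exactly as in (ii), $\textnormal{Ker}\,(L_{B_\infty})\cap\textnormal{Ker}\,(R_{B_\infty^*})$ consists of the $S$ with $\textnormal{Ran}\,(S)\subseteq H_0$ vanishing on $H_0^{\perp}$, hence is isomorphic to $\mathcal C_2(H_0)$ and infinite-dimensional. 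The remaining hypothesis of (M) is the main point: since $(L_{B_\infty}R_{B_\infty^*})(S)=B_\infty SB_\infty^*$, the inclusion $\textnormal{Ker}\,(L_{B_\infty})+\textnormal{Ker}\,(R_{B_\infty^*})\subseteq\textnormal{Ker}\,(L_{B_\infty}R_{B_\infty^*})$ is clear, and for the converse let $P_0$ be the orthogonal projection of $\mathcal H$ onto $H_0=\textnormal{Ker}\,(B_\infty)$ and suppose $B_\infty SB_\infty^*=0$. Then $S=P_0S+(I-P_0)S$ with $P_0S\in\textnormal{Ker}\,(L_{B_\infty})$ since $B_\infty P_0=0$; and $B_\infty P_0=0$ also gives $B_\infty(I-P_0)SB_\infty^*=B_\infty SB_\infty^*=0$, so applying $B_\infty^*$ on the left and using $B_\infty^*B_\infty=I-P_0$ yields $(I-P_0)SB_\infty^*=0$, i.e.\ $(I-P_0)S\in\textnormal{Ker}\,(R_{B_\infty^*})$. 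Both summands remain Hilbert-Schmidt, so (M) applies and $(L_{B_\infty},R_{B_\infty^*})$ is a universal pair.

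The only step requiring a small idea rather than bookkeeping is the decomposition $\textnormal{Ker}\,(L_{B_\infty}R_{B_\infty^*})=\textnormal{Ker}\,(L_{B_\infty})+\textnormal{Ker}\,(R_{B_\infty^*})$, via the projection $P_0$ together with $B_\infty^*B_\infty=I-P_0$ and $B_\infty P_0=0$; note that this same decomposition in fact also holds for the scalar shift $B$. The genuine difference between (ii) and (iii) is hypothesis (i) of (M): for $B_\infty$ the common kernel $\textnormal{Ker}\,(L_{B_\infty})\cap\textnormal{Ker}\,(R_{B_\infty^*})\cong\mathcal C_2(H_0)$ is infinite-dimensional precisely because $\dim\textnormal{Ker}\,(B_\infty)=\infty$, whereas for $B$ it collapses to $\mathbb C\,(e_0\otimes e_0)$ since $\dim\textnormal{Ker}\,(B)=1$. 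So the infinite multiplicity of $B_\infty$ is exactly what is needed.
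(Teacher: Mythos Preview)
Your proof is correct and follows essentially the same strategy as the paper: condition (C) for part (i), the one-dimensional common kernel for (ii), and M\"uller's condition (M) for (iii). The only notable difference is presentational: in (iii) the paper represents $S\in\mathcal C_2(\mathcal H)$ by its operator matrix $(S_{i,j})$ with $S_{i,j}=P_iSJ_j$, computes the three kernels as the sets of matrices supported on the first row, first column, and first row-and-column respectively, and then splits an element of $\textnormal{Ker}(L_{B_\infty}R_{B_\infty^*})$ explicitly into its first-row part plus the remaining first-column part. Your decomposition $S=P_0S+(I-P_0)S$ together with the identities $B_\infty P_0=0$ and $B_\infty^*B_\infty=I-P_0$ is exactly the coordinate-free version of the same splitting (your $P_0S$ is the paper's first-row summand, your $(I-P_0)S$ the first-column summand with zero $(0,0)$-entry). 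Both arguments buy the same conclusion with the same amount of work; yours is slightly more conceptual, the paper's slightly more explicit.
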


\begin{proof}
(i) We check that   $L_B$ and $R_{B^*}$ satisfy condition (C) on $\mathcal C_2(\ell^2)$
In fact, if $(e_n)$ is the standard unit vector basis of $\ell^2$ 
then $L_B(e_n \otimes e_0) = e_n \otimes Be_0 = 0$ for any $n \in \mathbb Z_+$, so that 
$\textnormal{Ker}\,(L_B)$ is infinite-dimensional. 
Moreover, $\textnormal{Ran}\,(L_B) = \mathcal C_2(\ell^2)$
since $BB^* = I_H$, and the argument for $R_{B^*}$ is similar. 

The universality of  $L_{B_{\infty}}$ and  $R_{B^*_{\infty}}$ on $\mathcal C_2(\mathcal H))$
follows from part (iii) 
(alternatively, one may also modify the preceding argument as in the proof of (iii)).

(ii) Recall that
$(e_n \otimes e_m)_{m, n \in \mathbb Z_{+}}$ is an orthonormal basis of  $\mathcal C_2(\ell^2)$. 
It follows from the identities $L_B(e_n \otimes e_m) = e_n \otimes Be_m$ and  
$R_{B^*}(e_n \otimes e_m) = Be_n \otimes e_m$ that
 $\textnormal{Ker}\,(L_B) = [e_n \otimes e_0: n \in \mathbb Z_+]$ and 
  $\textnormal{Ker}\,(R_{B^*}) = [e_0 \otimes e_n: n \in \mathbb Z_+]$. 
  Here $[A]$ denotes the closed linear
  span of the set $A \subset \mathcal C_2(\ell^2)$.  In particular,
 $\textnormal{Ker}\,(L_B) \cap \textnormal{Ker}\,(R_{B^*}) = 
[e_0 \otimes e_0]$
 is $1$-dimensional, so that $(L_{B},R_{B^*})$ can not be a universal pair.
  
(iii) 
We will verify that condition (M) holds.
 Towards this note first that
 $L_{B_{\infty}}$ and $R_{B^*_{\infty}}$ are surjections on 
$\mathcal C_2(\mathcal H)$, since  $B_{\infty}B^*_{\infty} = I_{\mathcal H}$
implies  that  $L_{B_{\infty}}(B^*_{\infty}S) = S$ and
$R_{B^*_{\infty}}(SB_\infty) = S$ for any $S \in \mathcal C_2(\mathcal H)$.

To compute the kernels of $L_{B_{\infty}}$ and $R_{B^*_{\infty}}$
we need the fact that any $S \in \mathcal C_2(\mathcal H)$ is uniquely determined by its 
operator-matrix components $S_{i,j} = P_iSJ_j \in \mathcal C_2(H)$ for $i,j \in \mathbb Z_+$.
Here $P_i$ is the orthogonal projection $\mathcal H \to H$ onto the $i$:th copy of $H$,
 and $J_j: H \to \mathcal H$ the canonical inclusion from the $j$-th copy. 
One deduces from the definition of $S_{i,j}$ and the identity 
$L_{B_{\infty}}(u \otimes v) = u \otimes B_{\infty}v$ for $u, v \in \mathcal H$
that 
\[
L_{B_{\infty}}(S) = 
\begin{pmatrix}
     S_{1,0} 	& S_{1,1} & \cdots  \\
    S_{2,0} & S_{2,1} 	& \cdots  \\
  \vdots  	& \vdots 				& \ddots  
\end{pmatrix}
\]
for $S = (S_{i,j})$, so that by uniqueness 
\begin{equation}\label{leftker}
\textnormal{Ker}\,(L_{B_{\infty}}) = \{S = (S_{i,j}) \in \mathcal C_2(\mathcal H): S_{i,j} = 0 \textrm{ for } i > 0\}.
\end{equation}
Similarly, the identity 
$R_{B^*_{\infty}}(u \otimes v) = B_\infty u \otimes v$ for $u, v \in \mathcal H$
yields that 
\[
R_{B^*_{\infty}}(S) = 
\begin{pmatrix}
     S_{0,1} 	& S_{0,2} & \cdots  \\
    S_{1,1} & S_{1,2} 	& \cdots  \\
  \vdots  	& \vdots 				& \ddots  
\end{pmatrix}, 
\]
whence 
\begin{equation}\label{rightker}
\textnormal{Ker}\,(R_{B^*_{\infty}}) = \{S = (S_{i,j}) \in \mathcal C_2(\mathcal H): S_{i,j} = 0 \textrm{ for } j > 0\}.
\end{equation}
In particular, we get  that 
\[
\textnormal{Ker}\,(L_{B_{\infty}}) \cap \textnormal{Ker}\,(R_{B^*_{\infty}}) = \{S = (S_{i,j}) \in \mathcal C_2(\mathcal H):
S_{i,j} =0 \textrm{ for } i > 0 \textrm{ or }  j > 0\}
\]
is infinite-dimensional, since the operator $S_{0,0} \in \mathcal C_2(H)$ can be chosen freely. 
Finally, we need to verify  that 
\begin{equation}\label{upair}
\textnormal{Ker}\,(L_{B_{\infty}}R_{B^*_{\infty}}) = \textnormal{Ker}\,(L_{B_{\infty}}) + \textnormal{Ker}\,(R_{B^*_{\infty}}).
\end{equation}
However, \eqref{upair} follows  from \eqref{leftker} and \eqref{rightker},
the identity 
$L_{B_{\infty}}R_{B^*_{\infty}}(u \otimes v) = B_{\infty}u \otimes B_{\infty}v$ 
for $u, v \in \mathcal H$,   the observation that 
\[
L_{B_{\infty}}R_{B^*_{\infty}}(S) = 
\begin{pmatrix}
     S_{1,1} 	& S_{1,2} & \cdots  \\
    S_{2,1} & S_{2,2} 	& \cdots  \\
  \vdots  	& \vdots 				& \ddots  
\end{pmatrix} 
\]
as well as the fact that 
\[
\begin{pmatrix}
     S_{0,0} 	& S_{0,1} & \cdots  \\
    S_{1,0} & 0 	& \cdots  \\
  \vdots  	& \vdots 				& \ddots  
\end{pmatrix}
= \begin{pmatrix}
     S_{0,0} 	& S_{0,1} & \cdots  \\
   0 & 0 	& \cdots  \\
  \vdots  	& \vdots 				& \ddots  
\end{pmatrix}
+ 
\begin{pmatrix}
     0 	& 0 & \cdots  \\
    S_{1,0} & 0 	& \cdots  \\
  \vdots  	& \vdots 				& \ddots  
\end{pmatrix}
\]
is the sum of two well-defined Hilbert-Schmidt operators for any 
$S = (S_{i,j}) \in \mathcal C_2(\mathcal H)$, see e.g. the proof of \cite[1.c.8]{LT1}.

We conclude from condition (M) that  $(L_{B_{\infty}},R_{B^*_{\infty}})$
 is a universal  pair.
\end{proof}

\noindent \textit{Remarks.} By a straightforward modification of the argument in 
part (iii) one may also 
show that $(L_{(B_{\infty})^m},R_{(B^*_{\infty})^n})$
 is a universal  pair on $\mathcal C_2(\mathcal H)$ for any $m,n \in \mathbb N$. 
We do not know explicit conditions on $(U,V)$ 
which ensure that $(L_U,R_V)$ is a universal pair on $\mathcal C_2(H)$. 

\medskip

For our last examples we return to the setting (and notations) from section \ref{secunivex}
related to composition operators on $H^2 (\mathbb{D})$
associated to hyperbolic automorphisms of $\mathbb{D}$. Recall that 
$C_{\varphi_r}-\lambda  I$ and  $C_{\varphi_s}-\mu I$ commute for any $0 < r, s < 1$ and 
$\lambda, \mu \in \mathbb{C}$. This follows from the fact that
\begin{equation}\label{sgrp}
\varphi_r \circ \varphi_s = \varphi_t = \varphi_s \circ \varphi_r
\end{equation}
where $t = \frac{r+s}{1+rs}$. The result of Nordgren, Rosenthal and Wintrobe \cite{NRW}
suggests the following natural question.

\medskip

\noindent \textit{Problem.}
Are there universal pairs of the form
\[
\big(C_{\varphi_r}-\lambda  I , C_{\varphi_s}-\mu I \big) \in \mathcal{L}(H^2 (\mathbb{D}))^2,
\]
for some $0 < r, s < 1$, $\lambda  \in \mathcal{A}_r$ and $\mu \in \mathcal{A}_s$  ?

\medskip

We first note some obvious restrictions in view of Proposition \ref{ai}.

\begin{example}
(i) The pair $(C_{\varphi_{r}} - \lambda I, C_{\varphi_{r}} - \mu I)$ is  not 
universal for any  $0 < r < 1$ and $\lambda \neq \mu$. In fact, in this case 
$\textnormal{Ker}\,(C_{\varphi_{r}} - \lambda I) \cap \textnormal{Ker}\,(C_{\varphi_{r}} - \mu I) 
= \{0\}$ once $\lambda \neq \mu$.

(ii) Let $0 < r < 1$. By \eqref{sgrp} there is $r_n \in (0,1)$ 
such that $\varphi ^n_{r} = \varphi_{r_{n}}$, 
where $\varphi ^n_{r} = \varphi_{r} \circ \ldots \circ \varphi_{r}$ ($n$-fold composition). Then
$(C_{\varphi_{r}} - \lambda I, C_{\varphi_{r_{n}}} - \lambda^n I)$ 
is not a universal pair for any  $n \ge 2$ and $\lambda \in \mathcal{A}_r$.
Indeed, to see this we write $C_{\varphi_{r_{n}}} - \lambda^n I = S(C_{\varphi_{r}} - \lambda I)$,
where $S$ commutes with $C_{\varphi_{r}}$, and apply  Proposition \ref{ai}.
\end{example}

In our final example we use the recent approach of Cowen and Gallardo-Guti\'{e}rrez 
\cite{CG2} to the universality result by Nordgren, Rosenthal and Wintrobe
in order to analyse more carefully a pair, which shows 
subtler obstructions related to the existence of NRW-pairs.

\begin{example}\label{NRWex}
There are $r, s \in (0,1)$ and respective eigenvalues  $\lambda \in \mathcal{A}_r$,
$\mu \in \mathcal{A}_s$ such that 
$\textnormal{Ker}\,(C_{\varphi_{r}} - \lambda I) \cap \textnormal{Ker}\,(C_{\varphi_{s}} - \mu I)$
is infinite-dimensional, but condition (M) fails to hold for the pair
$\big(C_{\varphi_r}-\lambda I , C_{\varphi_s}-\mu I \big)$.
\end{example}

\begin{proof}
Fix $0 < r < 1$ and consider the positive 
eigenvalue $\lambda = (\frac{1-r}{1+r})^{-u} \in \mathcal{A}_r$, where $0 < u < 1/2$. 
It follows from the proofs of Lemma 7.3 and Theorem 7.4 in \cite{CM}
that the functions
\[
f_n(z) = \exp \Big( (u+ i n2\pi a) \log \frac{1+z}{1 - z}\Big),
\]
where $a = (\log \frac{1-r}{1+r})^{-1}$ and $n \in \mathbb{Z}$,
forms a linearly independent family of eigenfunctions for $C_{\varphi_{r}}$ in $H^2 (\mathbb{D})$
associated with the eigenvalue $\lambda$. Here the logarithm refers to the principal branch.
Let $r < s < 1$ and 
consider  $\mu = (\frac{1-s}{1+s})^{-u} \in \mathcal{A}_s$. As above 
\[
g_m(z) = \exp \Big( (u+ i m 2\pi b) \log \frac{1+z}{1 - z}\Big),
\]
where $b = (\log \frac{1-s}{1+s})^{-1}$ and $m \in \mathbb{Z}$,
are linearly independent 
eigenfunctions of $C_{\varphi_{s}}$ for the eigenvalue $\mu$. 
Moreover, we select  $r$ and $s$  so that  
 \begin{equation}\label{infsol}
 \frac{n}{m} = \frac{b}{a} = \frac{\log \frac{1+r}{1-r}}{\log \frac{1+s}{1-s}}
 \end{equation}
holds for  infinitely many pairs
$n, m \in \mathbb{Z} \setminus \{0\}$. 
 This ensures that 
 $\textnormal{Ker}\,(C_{\varphi_{r}} - \lambda I) \cap \textnormal{Ker}\,(C_{\varphi_{s}} - \mu I)$
is infinite-dimensional by comparing the above eigenfunctions. 

Recall from \cite[Thm. 5]{CG2011} (see also \cite{CG2}) that there is 
an analytic covering map $\psi_r: \mathbb{D} \to \mathcal{A}_r$, such that 
$C_{\varphi_{r}} - \mu I$ and $T^*_{\psi_{r}- \lambda}$ are similar operators 
on $H^2 (\mathbb{D})$, where $T_{\psi_{r}- \lambda} \in \mathcal{L}(H^2 (\mathbb{D}))$
is the analytic Toeplitz operator
$f \mapsto (\psi_{r}- \lambda)f$. Indeed,
$$
\psi_r (z) = \Big(\frac{1-z}{1+z} \Big)^{i t_r/\pi},
$$
where $ t_r = -\log \big( \frac{1-r}{1+r}\big)>0$. Moreover, 
$C_{\varphi_{s}} - \mu I$ and $T^*_{\psi_{s}- \mu}$ are similar operators
on $H^2 (\mathbb{D})$
for the covering map $\psi_s: \mathbb{D} \to \mathcal{A}_s$, where $ \psi_s (z) = \Big(\frac{1-z}{1+z} \Big)^{i t_s/\pi}$ and $ t_s = -\log \big( \frac{1-s}{1+s}\big)$.

By standard duality, the dual version of part (ii) of condition (M) for the pair 
$(T^*_{\psi_{r}- \lambda},T^*_{\psi_{s}- \mu})$ is the requirement that
\begin{equation}\label{ran}
\textnormal{Ran}\,(T_{\psi_{s} - \mu}T_{\psi_{r}- \lambda}) 
= \textnormal{Ran}\,(T_{\psi_{r}- \lambda})  \cap \textnormal{Ran}\,(T_{\psi_{s}- \mu}).
\end{equation}
Note for this that all the ranges are closed, since the adjoints are onto maps by similarity. 

We claim that condition \eqref{ran} does not hold. Towards this  
consider the standard factorisation 
$\psi_{r}- \lambda = B_1S_1F_1$
into a Blaschke product $B_1$ containing the zeroes of the function (counting 
multiplicity), a singular inner function $S_1$ and an outer function $F_2$. 
Let $\psi_{s}- \mu = B_2S_2F_2$ be the analogous factorisation for $\psi_{s}$.
It is  not difficult to check from \eqref{infsol} and the explicit form of 
$\psi_{r}$ and $\psi_{s}$ that 
the functions $\psi_{r}- \lambda$ and $\psi_{s}- \mu$ have infinitely many simple common zeroes.
Let $B_1 = B_0B_3$ and $B_2 = B_0B_4$, where $B_0$ is the Blaschke product 
which contains the common zeroes.

To conclude the argument consider the function $g = B_0B_3B_4S_1F_1S_2F_2$. Observe that
$g \in H^2 (\mathbb{D})$ since the maps $\psi_{r}- \lambda$ and $\psi_{s}- \mu$, and hence also $S_1F_1$ and $S_2F_2$,
belong to $H^\infty$ (see \cite[Thm. 17.9]{Ru}, for instance).
Clearly 
$g \in   \textnormal{Ran}\,(T_{\psi_{r}- \lambda})  \cap \textnormal{Ran}\,(T_{\psi_{r}- \lambda})$
by inspection.
However, the Blaschke product containing the zeroes of 
$(\psi_{r}- \lambda)(\psi_{s}- \mu)$ 
already has the form $B_0^2B_3B_4$, so that 
$g \notin  \textnormal{Ran}\,(T_{\psi_{s} - \mu}T_{\psi_{r}- \lambda})$
in view of the uniqueness of the factorisation.
\end{proof}

\noindent \textit{Remarks.} One may verify from  \eqref{infsol} that 
the pair $\big(C_{\varphi_r}-\lambda I , C_{\varphi_s}-\mu I  \big)$ 
studied in Example \ref{NRWex}
has the property that 
there is $p, q \in \mathbb{N}$ with $p < q$, for which 
$C^q_{\varphi_r}-\lambda^q I = C^p_{\varphi_s}-\mu^p I$. However, we do not have
general results which would exclude such a property for universal pairs.

\section*{Acknowledgements}

The authors thank Isabelle Chalendar for drawing the attention to reference \cite{Mu2}, and Pekka Nieminen for useful comments.
The first author is grateful to  Eva Gallardo-Guti\'{e}rrez for several illuminating discussions on  universal operators during the past years. 

This paper is part of the first author's PhD thesis, which is 
supervised by the second author and Pekka Nieminen. 

\bibliographystyle{plain}
%\bibliography{universality_references}

\end{document}